\documentclass[12pt]{amsart}
\usepackage{amssymb}
\usepackage{amsmath,latexsym}
\usepackage{amsfonts}
\usepackage{amsthm}
\usepackage{eucal}
\allowdisplaybreaks[4]
\setlength{\textwidth}{16cm}
\setlength{\textheight}{20 cm}
\addtolength{\oddsidemargin}{-1.5cm}
\addtolength{\evensidemargin}{-1.5cm}
\newcommand{\R}{\mathbb{R}}

\newcommand{\vol}{{\rm vol}}
\newcommand{\Vol}{{\rm Vol}}
\newcommand{\supp}{{\rm supp}}

\numberwithin{equation}{section}

\newtheorem{thm}{Theorem}[section]
\newtheorem{defn}[thm]{Definition}
\newtheorem{lem}[thm]{Lemma}
\newtheorem{prop}[thm]{Proposition}

\newtheorem{remark}[thm]{Remark}

\begin{document}

\title[]{Perelman's $W$-functional on manifolds with conical singularities}
\author{Xianzhe Dai}
\address{
Department of Mathematics, East China Normal University, Shanghai, China, and
University of Californai, Santa Barbara
CA93106,
USA}
\email{dai@math.ucsb.edu}

\author{Changliang Wang}
\address{Department of Mathematics and Statistics, McMaster University, Hamilton, Ontario, Canada}
\email{wangc114@math.mcmaster.ca}
\date{}

\begin{abstract}
In this paper, we develop the theory of Perelman's $W$-functional on manifolds with isolated conical singularities. In particular, we show that the infimum of $W$-functional over a certain weighted Sobolev space on manifolds with isolated conical singularities is finite, and the minimizer exists, if the scalar curvature satisfies certain condition near the singularities. We also obtain an asymptotic order for the minimizer near the singularities.
\end{abstract}

\maketitle

\begin{section}{Introduction}
\noindent Let $(M,g)$ be a smooth compact Riemannian manifold without boundary. We recall some Riemannian functionals introduced by G. Perelman to study Ricci flows\cite{Per02}. The $\mathcal{F}$-functional is defined by
\begin{equation}\label{F-functional}
\mathcal{F}(g,f)=\int_{M}(R_{g}+|\nabla f|)e^{-f}d\vol_{g},
\end{equation}
where $R_{g}$ is the scalar curvature of the metric $g$, and $f$ is a smooth function on $M$. Let $u=e^{-\frac{f}{2}}$, then the $\mathcal{F}$-functional becomes
\begin{equation}\label{F-functional2}
\mathcal{F}(g,u)=\int_{M}(4|\nabla u|^{2}+R_{g}u^{2})d\vol_{g}.
\end{equation}
The Perelman's $\lambda$-functional is defined by
\begin{equation}\label{lambda-functional}
\lambda(g)=\inf\left\{\mathcal{F}(g,u) \mid \int_{M}u^{2}d\vol_{g}=1\right\}.
\end{equation}

Clearly, from $(\ref{lambda-functional})$ and $(\ref{F-functional2})$,  $\lambda(g)$ is the smallest eigenvalue of the Schr\"odinger operator $-4\Delta_{g}+R_{g}$. Starting from this point of view, we have extended Perelman's theory for the $\lambda$-functional to a class of singular manifolds, namely manifolds with isolated conical singularities in \cite{DW17}.

To take into account of scale, Perelman also introduces $W$-functional and $\mu$-functionial on smooth compact manifolds in \cite{Per02}. They play a crucial role in the study of singularities of Ricci flow. The $W$-functional is given by
\begin{equation}\label{W-functional}
\begin{aligned}
W(g, f, \tau)
&=\int_{M}[\tau(R_{g}+|\nabla f|^{2})+f-n]\frac{1}{(4\pi\tau)^{\frac{n}{2}}}e^{-f}d\vol_{g}\\
&=\frac{1}{(4\pi\tau)^{\frac{n}{2}}}\tau\mathcal{F}(g, f)+\frac{1}{(4\pi\tau)^{\frac{n}{2}}}\int_{M}(f-n)e^{-f} d\vol_{g},
\end{aligned}
\end{equation}
where $f$ is a smooth function, and $\tau>0$ is a scale parameter. As in $\mathcal{F}$-functional, let $u=e^{-\frac{f}{2}}$, then the $W$-functional becomes
\begin{equation}
W(g, u, \tau)=\frac{1}{(4\pi\tau)^{\frac{n}{2}}}\int_{M}[\tau(R_{g}u^{2}+4|\nabla u|^{2})-2u^{2}\ln u-nu^{2}]d\vol_{g}.
\end{equation}
The $\mu$-functional is defined by
\begin{equation}\label{mu-functional}
\mu(g, \tau)=\inf\left\{W(g, u, \tau) \mid u\in C^{\infty}(M),\ \  u>0,\ \  \text{and}\ \  \frac{1}{(4\pi\tau)^{\frac{n}{2}}}\int_{M}u^{2}d\vol_{g}=1\right\},
\end{equation}
for each $\tau>0$. It is well-know that for each fixed $\tau>0$ the existence of finite infimum follows from the Log Sobolev inequality on smooth compact Riemnnian manifolds, while the regularity of the minimizer follows from the elliptic estimates and Sobolev embedding. The nonlinear log term makes it a bit trickier than the eigenvalue problem (see, e.g. \S 11.3 in \cite{AH10} for details). For noncompact manifolds, the story  is different, and the $W$-functional on noncompact manifolds was studied in \cite{Zha12}.

In this paper we study the $W$-functional and $\mu$-functional on compact Riemannian manifolds with isolated conical singularities. Recall that, by a compact Riemannian manifold with isolated conical singularities we mean a singular manifold $(M, g, S)$ whose singular set $S$ consists of finite many points and its regular part $(M\setminus S, g)$ is a smooth Riemannian manifold. Moreover, near the singularities, the metric is asymptotic to a (finite) metric cone $C_{(0, 1]}(N)$ where $N$ is a compact smooth Riemannian manifold with metric $h_0$ which will be called a cross section (see \S 1 for the precise definition). Our main result is the following theorem.

\begin{thm}
Let $(M^{n}, g, S)$ $(n\geq3)$ be a compact Riemannian manifold with isolated conical singularities. If the scalar curvature of the cross section at the conical singularity $R_{h_{0}}>(n-2)$ on $N$, then for each fixed $\tau>0$,
\begin{equation}\label{FiniteInfimum}
\inf\left\{W(g, u, \tau)\mid u\in H^{1}(M), u>0, \left\|\frac{1}{(4\pi\tau)^{\frac{n}{4}}}u\right\|_{L^{2}(M)}=1\right\}>-\infty
\end{equation}
Here $H^{1}(M)$ is the weighted Sobolev space defined in $(\ref{WSN1})$.

Moreover, there exists $u_{0}\in C^{\infty}(M\setminus S)$ that realizes the infimum in $(\ref{FiniteInfimum})$. Furthermore, if $(M^{n}, g, S)$ satisfies the asymptotic condition $AC_{1}$ defined in $(\ref{AC})$, then near each singularity, the minimizer satisfies
\begin{equation}\label{AsymptoticBehavior}
u_{0}=o(r^{-\alpha}), \ \ \text{as}\ \ r\rightarrow0,
\end{equation}
for any $\alpha>\frac{n}{2}-1$. Here $r$ is the radial variable on each conical neighborhood of the singularities, and $r=0$ corresponds to the singular points.
\end{thm}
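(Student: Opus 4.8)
I would establish the three assertions in turn; since $(M\setminus S,g)$ is smooth and compact with boundary, all the new difficulties sit in a conical neighbourhood of each singular point, where $g$ is modelled on $dr^2+r^2h_0$ and $R_g=r^{-2}(R_{h_0}-(n-1)(n-2))$ up to lower order terms.

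\textbf{Finiteness of the infimum.} Compared with the $\mathcal F$-functional treated in \cite{DW17}, the only extra term is $-2u^2\ln u=-u^2\ln u^2$. First I would record a logarithmic Sobolev inequality on $(M\setminus S,g)$: combining Jensen's inequality with the Sobolev embedding $H^1(M)\hookrightarrow L^{2n/(n-2)}(M)$ from \cite{DW17} and the concavity of $\ln$, one gets, for every $\delta>0$,
\begin{equation*}
\int_M u^2\ln u^2\,d\vol_g\ \le\ \delta\int_M|\nabla u|^2\,d\vol_g+C(\delta,\tau,n)
\end{equation*}
for all $u$ with $\|(4\pi\tau)^{-n/4}u\|_{L^2}=1$. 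The decisive input is a sharp Hardy estimate near each singularity: since $R_g=r^{-2}(R_{h_0}-(n-1)(n-2))$ on the model cone and $\int|\nabla u|^2\ge\frac{(n-2)^2}{4}\int r^{-2}u^2$, one gets, for any constant $\theta$ with $1-\frac{\min_NR_{h_0}-(n-2)}{(n-2)^2}\le\theta<1$,
\begin{equation*}
\int\left(4\theta\tau|\nabla u|^2+\tau R_gu^2\right)\ \ge\ \tau\int\frac{R_{h_0}-(n-2)-(1-\theta)(n-2)^2}{r^2}\,u^2\ \ge\ 0
\end{equation*}
on the conical neighbourhood; such a $\theta<1$ exists \emph{exactly because} $R_{h_0}>n-2$. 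Choosing $\delta$ so that the Dirichlet energy absorbed through the log-Sobolev inequality costs at most the remaining fraction $(1-\theta)\cdot 4\tau\int|\nabla u|^2$, and using that $R_g$ is bounded away from $S$, one concludes $W(g,u,\tau)\ge-C(\tau,g)$. (Boundary terms from the cutoff near $S$ and from the truncated Hardy inequality are lower order and absorbed.) This step is the heart of the matter: the potential $R_g\sim r^{-2}$ is singular and, when $R_{h_0}<(n-1)(n-2)$, negative near the tip, so it is the Hardy--curvature cancellation $4\theta\tau|\nabla u|^2+\tau R_gu^2\gtrsim r^{-2}u^2$, valid with a strictly positive surplus precisely under the hypothesis, that simultaneously tames the scalar curvature and leaves room for the logarithmic term.

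\textbf{Existence and regularity of the minimizer.} I would run the direct method. From the estimate above a minimizing sequence $u_j$ is bounded in the weighted space $H^1(M)$ (that estimate controls a definite fraction of $\int|\nabla u_j|^2$ and of $\int r^{-2}u_j^2$); pass to a weak limit $u_0$. The constraint survives, and $u_0\not\equiv0$, thanks to the compactness of the embedding $H^1(M)\hookrightarrow L^2(M)$ from \cite{DW17}; lower semicontinuity of the quadratic part follows from convexity together with the nonnegative-quadratic-form structure of $4\theta\tau\int|\nabla u|^2+\tau\int R_gu^2$ near $S$ used above. The one genuinely delicate point is the logarithmic term, which is neither convex nor weakly continuous: here I would use $|t\ln t|\le C_\varepsilon(t^{1-\varepsilon}+t^{1+\varepsilon})$ and the Sobolev embedding to show that $\{u_j^2\ln u_j^2\}$ is equi-integrable, whence $\int u_j^2\ln u_j^2\to\int u_0^2\ln u_0^2$ by Vitali's convergence theorem along an a.e.\ convergent subsequence. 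Thus $u_0$ realizes the infimum; replacing $u_0$ by $|u_0|$ we may take $u_0\ge0$, and on $M\setminus S$ it solves the Euler--Lagrange equation
\begin{equation*}
-4\tau\Delta_gu_0+\tau R_gu_0=2u_0\ln u_0+c\,u_0,
\end{equation*}
a semilinear elliptic equation with locally smooth coefficients and subcritical nonlinearity. Moser iteration followed by $L^p$ and Schauder bootstrap gives $u_0\in C^\infty(M\setminus S)$, and strict positivity follows from the Harnack inequality, since once $u_0\in C^{0,\alpha}_{\mathrm{loc}}$ the ``potential'' $\ln u_0$ lies in $L^p_{\mathrm{loc}}$ for every $p$ (it grows at most logarithmically near its zero set).

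\textbf{Asymptotics near $S$.} On each conical neighbourhood I would pass to cylindrical coordinates $r=e^{-t}$, $t\to\infty$, under which $dr^2+r^2h_0=e^{-2t}(dt^2+h_0)$, and set $u_0=r^{-(n-2)/2}w=e^{(n-2)t/2}w$. A direct computation shows that $u_0\in H^1(M)$ near the singularity is equivalent to $w\in H^1([T,\infty)\times N)$ for the cylindrical metric (modulo a boundary term at $t=T$), and that the Euler--Lagrange equation becomes
\begin{equation*}
-\partial_t^2w-\Delta_{h_0}w+\tfrac{R_{h_0}-(n-2)}{4}\,w\ =\ G,
\end{equation*}
where $G$ gathers $e^{-2t}$ times polynomially-$t$-weighted multiples of $w$ and $w\ln w$, together with the error terms produced by the asymptotic condition $AC_1$; by the definition of $AC_1$ all of these tend to $0$ as $t\to\infty$. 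The zeroth-order coefficient is bounded below by the positive constant $\tfrac14(\min_NR_{h_0}-(n-2))$ --- again exactly the hypothesis --- so the cylindrical operator is uniformly elliptic and coercive. Interior elliptic estimates on the unit slices $[k,k+1]\times N$, combined with $\|w\|_{L^2([k-1,k+2]\times N)}\to0$ (a tail of the finite cylindrical energy) and $\|G\|_{L^2([k-1,k+2]\times N)}\to0$, force $\|w\|_{H^2([k,k+1]\times N)}\to0$; bootstrapping (harmless since $w>0$ and $w\ln w\to0$) gives $w\in L^\infty$, indeed $w\to0$, as $t\to\infty$. Translating back, $u_0=O(r^{-(n-2)/2})$, hence $u_0=o(r^{-\alpha})$ for every $\alpha>\frac n2-1$, as claimed. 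The recurring obstacle throughout is the interaction at the singularity between the singular potential $R_g$, the non-convex and non-continuous logarithmic term, and the weighted function spaces; the hypothesis $R_{h_0}>n-2$ is exactly what makes the Hardy--curvature form positive with surplus, and this single fact is what is used both to bound $W$ from below and, after the conformal change, to run the elliptic estimates on the cylinder.
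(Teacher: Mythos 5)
Your proposal reaches all three conclusions by a correct overall strategy --- log-Sobolev plus the Hardy--curvature cancellation for the lower bound, the direct method for existence, and elliptic analysis on the model cone for the asymptotics --- but two of your key technical steps take a genuinely different route from the paper's. For the lower bound you unpack explicitly the inequality $4\theta\tau\int|\nabla u|^{2}+\tau\int R_{g}u^{2}\geq0$ near the tip, with $\theta<1$ existing exactly when $R_{h_{0}}>n-2$; the paper simply quotes the coercivity estimate of \cite{DW17}, so this is the same mechanism made visible. For the convergence of the logarithmic term along the minimizing sequence you use $|t\ln t|\leq C_{\varepsilon}(t^{1-\varepsilon}+t^{1+\varepsilon})$, boundedness of $u_{j}$ in $L^{2}\cap L^{2n/(n-2)}$, and Vitali's theorem along an a.e.\ convergent subsequence; the paper instead bounds $\|u_{j}^{2}\ln u_{j}\|_{W^{1,1}_{1-n}}$ and invokes the compactness of the weighted embedding $W^{1,1}_{1-n}(M)\subset L^{q}(M)$ for $q<\frac{n}{n-1}$ (Proposition \ref{CompactWeightedSobolevEmbedding}, proved precisely for this purpose). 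Your equi-integrability route is more elementary and bypasses that proposition entirely; the paper's route produces a reusable compact-embedding statement. For the asymptotics the paper runs a weighted elliptic bootstrap in the scale-invariant spaces $W^{2,p}_{\delta}$, improving the weight from $1-\frac{n}{2}$ to $(1-\frac{n}{2})(1+\gamma)^{k}$ in finitely many steps and finishing with the embedding into $C^{0}_{\delta}$; you pass to cylindrical coordinates, set $u_{0}=r^{-(n-2)/2}w$, and use translation-invariant interior estimates on unit slabs together with the vanishing $H^{1}$-tail of $w$ --- both work, and yours in fact yields the slightly sharper endpoint decay $u_{0}=o(r^{-(n-2)/2})$. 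Two small points to make explicit: the weak lower semicontinuity of the quadratic part still requires the norm equivalence $C_{1}\|u\|_{H^{1}(M)}\leq\int_{M}((R_{g}+A)u^{2}+4|\nabla u|^{2})d\vol_{g}$, which is your Hardy--curvature surplus globalized by a cutoff and is exactly Theorem 5.1 of \cite{DW17}; and in the cylindrical picture the error $R_{g}-R_{g_{0}}$ involves two derivatives of $h_{r}-h_{0}$, so relegating it to the source term $G$ uses the asymptotic hypotheses in the same implicit way that the paper's weighted elliptic estimates do under the condition $AC_{1}$.
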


\begin{remark}
In a recent paper \cite{Ozu17}, T. Ozuch studied Perelman's functionals on cones and showed that the infimum in $(\ref{FiniteInfimum})$ is finite.
\end{remark}

In \cite{DW17}, we have shown that the infimum of the $\mathcal{F}$-functional over the weighted Sobolev space $H^{1}(M^{n})$ is finite if $R_{h_{0}}>(n-2)$ on the cross section. In order to control the term involving $\ln u$ in the $W$-functional, similar as in the smooth compact case, one  uses Log Sobolev inequality on compact manifolds with isolated conical singularities, which follows from a $L^{2}$ Sobolev inequality on compact manifolds with isolated conical singularities. Then we conclude that the infimum in $(\ref{FiniteInfimum})$ is finite. The $L^{2}$ Sobolev inequality on compact manifolds with isolated conical singularities is established in \cite{DY}. Clearly, it suffices to establish the inequality on a  metric cone. For this, a Hardy inequality on model cones, which follows from the classical weighted Hardy inequality, will play an important role.

Then we use the direct method in the calculus of variations to show the existence of a minimizer of the $W$-functional. Basically, we follow the strategy in the smooth compact case given detailed in \cite{AH10}. However, there are some obvious difference between smooth compact case and the singular case and there are more difficulties in the singular case that we need to deal with. For example, the scalar curvature, which appears in the $W$-functional, goes to infinity near the singularities. Thus in order to deal with the limit for the term involving $\ln u$ in the $W$-functional, instead of using the compactness of classical Sobolev embedding, we need to use the compactness of certain weighted Sobolev embedding obtained in Proposition $\ref{CompactWeightedSobolevEmbedding}$ below. Then the regularity of the minimizer follows from the classical elliptic equation theory, since this is a local problem.

Finally, we use certain weighted Sobolev embedding and weighted elliptic estimates to obtain the asymptotic behavior $(\ref{AsymptoticBehavior})$ for the minimizer. These weighted Sobolev embedding and weighted elliptic estimates follow from classical Sobolev embedding, interior elliptic estimates, and an useful scaling technique. The scaling technique can be applied in this problem because of the obvious homogeneity of a model cone along the radial direction. And the scaling technique has been demonstrated to be very useful in studying weighted norms and weighted spaces on non-compact manifolds. For a brief survey about its applications, we refer to \S1 in \cite{Bar86}.
\end{section}


\begin{section}{Manifolds with Isolated Conical Singularities}
\noindent As mentioned in the introduction, roughly speaking, a compact Riemannian manifold with isolated conical singularities is a singular manifold $(M, g)$ whose singular set $S$ consists of finite many points and its regular part $(M\setminus S, g)$ is a smooth Riemannian manifold. Moreover, near the singularities, the metric is asymptotic to a (finite) metric cone $C_{(0, 1]}(N)$ where $N$ is a compact smooth Riemannian manifold with metric $h_0$. More precisely,

\noindent  \begin{defn}\label{ManifoldWithConicalSingularities}
We say $(M^{n},d,g,x_{1},\cdots,x_{k})$ is a compact Riemannian manifold with isolated conical singularities at $x_{1},\cdots,x_{k}$, if
\begin{enumerate}
\item $(M,d)$ is a compact metric space,
\item $(M_{0},g|_{M_{0}})$ is an n-dimensional smooth Riemannian manifold, and the Riemannian metric $g$ induces the given metric $d$ on $M_{0}$, where $M_{0}=M\setminus\{x_{1},\cdots,x_{k}\}$,
\item for each singularity $x_{i}$, $1\leq i\leq k$, their exists a neighborhood $U_{x_{i}}\subset M$ of $x_{i}$ such that $U_{x_{i}}\cap \{x_{1},\cdots, x_{k}\}=\{x_{i}\}$, $(U_{x_{i}}\setminus\{x_{i}\},g|_{U_{x_{i}}\setminus\{x_{i}\}})$ is isometric to $((0,\varepsilon_{i})\times N_{i},dr^{2}+r^{2}h_{r})$ for some $\varepsilon_{i}>0$ and a compact smooth manifold $N_{i}$, where $r$ is a coordinate on $(0, \varepsilon_{i})$ and $h_{r}$ is a smooth family of Riemannian metrics on $N_{i}$ satisfying $h_{r}=h_{0}+o(r^{\alpha_{i}})$ as $r\rightarrow 0$, where $\alpha_{i}>0$ and $h_{0}$ is a smooth Riemannian metric on $N_{i}$.
\end{enumerate}
Moreover, we say a singularity $p$ is a cone-like singularity, if the metric $g$ on a neighborhood of $p$ is isometric to $dr^{2}+r^{2}h_{0}$ for some fixed metric $h_{0}$ on the cross section $N$.
\end{defn}

In our case, as usual, one does analysis away from the singular set. And in the above definition, we only require the zeroth order asymptotic condition $h_{r}=h_{0}+o(r^{\alpha})$, as $r\rightarrow 0$, for the family of metrics $h_{r}$ on the cross section $N$ with parameter $r>0$. However, in some problems we need certain higher order asymptotic conditions for $h_{r}$ as follows. We say that a compact Riemannian manifold $(M^{n}, g, x)$ with a single conical singularity at $x$ satisfies the condition $AC_{k}$, if
\begin{equation}\label{AC}
r^{i-1}|\nabla^{i}(h_{r}-h_{0})|\leq C_{i}<+\infty,
\end{equation}
for some constant $C_{i}$, and each $1\leq i\leq k$, near $x$.
\end{section}

\begin{remark}
For simplicity, in the rest of this paper, we will only work on manifolds with a single conical point as there is no essential difference between the case of a single singular point  and that of multiple isolated singularities. All our work and results for manifolds with a single conical point go through for manifolds with isolated conical singularities.
\end{remark}

For the simplicity of notations, we will use $(M^{n}, g, x)$ to denote a compact Riemannian manifold with a single conical singularity at $x$, because the metric $d$ is determined by the Riemannian metric $g$.


\begin{section}{Sobolev and weighted Sobolev embedding}

\noindent In this section, we recall certain weighted Sobolev spaces on compact Riemannian manifolds with conical singularities. And we establish the identification of some of them with the usual (unweighted) Sobolev spaces. Moreover, we also review and establish some Sobolev and weighted Sobolev embedding on compact Riemannian manifolds with isolated conical singularities.

Various weighted Sobolev spaces and their properties have been introduced and intensively studied in different settings, e.g. on complete non-compact manifolds with certain asymptotic behavior at infinity (see, e.g. \cite{Bar86}, \cite{Can81}, \cite{CBC81}, \cite{Loc81}, \cite{LP87}, \cite{LM85}, \cite{McO79}, \cite{NW73}, and \cite{Wan17}), or on various interesting bounded domains in $\mathbb{R}^{n}$ (see, e.g. \cite{KMR97}, \cite{Kuf85}, \cite{Tri78}, and \cite{Tur00}).
The ones most closely related to our setting are weighted Sobolev spaces introduced in \cite{BP03} on compact Riemannian manifolds with isolated tame conical singularities. Inspired by these pioneering work, we introduce appropriately weighted Sobolev spaces on compact Riemannian manifolds with isolated conical singularities as follows.

Let $(M^{n},g,x)$ be a compact Riemannian manifold with a single conical singularity at $x$, and $U_{x}$ be a conical neighborhood of $x$ such that $(U_{x}\setminus \{x\},g|_{U_{x}\setminus \{x\}})$ is isometric to $((0,\epsilon_{0})\times N, dr^{2}+r^{2}h_{r})$. For each $k\in\mathbb{N}$, $p\geq1$, and $\delta\in\mathbb{R}$, we define the weighted Sobolev space $W^{k, p}_{\delta}(M)$ to be the completion of $C^{\infty}_{0}(M\setminus\{x\})$ with respect to the weighted Sobolev norm
\begin{equation}\label{WSN}
\|u\|_{W^{k, p}_{\delta}(M)}=\left(\int_{M}(\sum^{k}_{i=0}\chi^{p(\delta-i)+n}|\nabla^{i}u|^{p})d\vol_{g}\right)^{\frac{1}{p}},
\end{equation}
where $\nabla^{i}u$ denotes the $i$-times covariant derivative of the function $u$, and $\chi\in C^{\infty}(M\setminus\{x\})$ is a positive weight function satisfying
\begin{equation}\label{WeightFunction}
\chi(y)=
\begin{cases} 1 & \text{if} \ \ y\in M\setminus U_{x}, \\ \frac{1}{r} & \text{if} \ \  y=(r, \theta)\in U_{x}\subset M, \ \  \text{and} \ \ r<\frac{\epsilon_{0}}{4},
\end{cases}
\end{equation}
and $0<(\chi(y))^{-1}\leq 1$ for all $y\in M\setminus\{x\}$.

For the simplicity of notations, as in \cite{DW17}, we set $H^{k}(M)\equiv W^{k, 2}_{k-\frac{n}{2}}(M)$, and
\begin{equation}\label{WSN1}
\|u\|^{2}_{H^{k}(M)}\equiv\int_{M}(\sum^{k}_{i=0}\chi^{2(k-i)}|\nabla^{i}u|^{2})d\vol_{g}.
\end{equation}

As usual, $W^{k, p}(M)$ denotes the completion of $C^{\infty}_{0}(M\setminus\{x\})$ with respect to the usual Sobolev norm
\begin{equation}\label{SobolevNorm}
\|u\|_{W^{k, p}(M)}=\left(\int_{M}(\sum^{k}_{i=0}|\nabla^{i}u|^{p})d\vol_{g}\right)^{\frac{1}{p}}.
\end{equation}

We now recall a weighted Hardy inequality {\rm (}see, e.g. {\bf 330} on p. 245 in \cite{HLP34}{\rm )}, and from which derive a Hardy inequality on metric cones. Later we will see that the Hardy inequality on cone will play an important role for establishing Sobolev embedding on manifolds with isolated conical singularities.

For $p>1$ and $a\neq1$, we have
\begin{equation}\label{WeightedHardyInequality}
\int^{\infty}_{0}|f|^{p}x^{-a}dx\leq\left(\frac{p}{|a-1|}\right)^{p}\int^{\infty}_{0}|f^{\prime}(x)|^{p}x^{p-a}dx,
\end{equation}
for any $f\in C^{\infty}_{0}((0, \infty))$.

This weighted Hardy inequality implies a Hardy inequality on an $n$-dimensional metric cone $(C(N)=(0, \infty)\times N^{n-1}, g=dr^{2}+r^{2}h)$ over a smooth compact Riemannian manifold $(N^{n-1}, h)$. Indeed, for $p>1$ and $k\in\mathbb{N}$ with $pk\neq n$, and any $u\in C^{\infty}_{0}(C(N))$,
\begin{equation}\label{ConeHardyInequality}
\begin{aligned}
\int_{C(N)}\frac{|u|^{p}}{r^{pk}}d\vol_{g}
&=\int_{N}\int^{\infty}_{0}\frac{|u|^{p}(r, \theta)}{r^{pk}}r^{n-1}drd\vol_{h}\\
&=\int_{N}\int^{\infty}_{0}|u|^{p}(r, \theta)r^{n-1-pk}drd\vol_{h}\\
&\leq \left(\frac{p}{|n-pk|}\right)^{p}\int_{N}\int^{\infty}_{0}\left|\frac{\partial u}{\partial r}\right|^{p}(r, \theta)r^{n-1-p(k-1)}drd\vol_{h}\\
&\leq \left(\frac{p}{|n-pk|}\right)^{p}\int_{C(N)}\frac{|\nabla u|_{g}^{p}}{r^{p(k-1)}}d\vol_{g}.
\end{aligned}
\end{equation}
Here, for the first inequality, we used the inequality  $(\ref{WeightedHardyInequality})$ for each $u(r, \theta)$ with fixed $\theta$ and $a=pk+1-n$. And the last inequality follows from
$|\nabla u|_{g}=\left(\left|\frac{\partial u}{\partial r}\right|^{2}+\frac{1}{r^{2}}|\nabla_{N}u|_{h}^{2}\right)^{\frac{1}{2}}$, where $\nabla_{N}$ is the covariant derivative on $N$ with respect to the metric $h$.

Then combining with the Kato's inequality, $|\nabla|\nabla^{k}u||\leq|\nabla^{k+1}u|$ for any smooth function $u$ and non-negative integer $k$, this Hardy inequality on metric cones directly implies the following equivalence between the weighted Sobolev norms and the usual Sobolev norms.
\begin{lem}
Let $(M^{n}, g, x)$ be a compact Riemannian manifold with a single conical singularity at $x$. For each $p>1$ and $k\in\mathbb{N}$ with $p\, i\neq n$ for all $i=1, 2, \cdots, k$, if $(M^{n}, g, x)$ satisfies the condition $AC_{k-1}$ near $x$ defined in $(\ref{AC})$, then we have for any $u\in C^{\infty}_{0}(M\setminus\{x\})$,
\begin{equation}
\|u\|_{W^{k, p}(M)}\leq\|u\|_{W^{k, p}_{k-\frac{n}{p}}(M)}\leq C(g, n, p, k)\|u\|_{W^{k, p}(M)},
\end{equation}
for a constant $C(g, n, p, k)$ depending on $g, n, p$, and $k$.

Consequently, we have $W^{k, p}_{k-\frac{n}{p}}(M^{n})=W^{k, p}(M^{n})$ for each $p>1$ and $k\in\mathbb{N}$ with $p\, i\neq n$ for all $i=1, 2, \cdots, k$.
\end{lem}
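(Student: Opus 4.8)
The plan is to get the left-hand inequality directly from the special value $\delta=k-\tfrac{n}{p}$ of the weight, and the right-hand inequality by iterating the cone Hardy inequality $(\ref{ConeHardyInequality})$ together with Kato's inequality, after cutting off near the singular point. For the left inequality, note that with $\delta=k-\tfrac{n}{p}$ the exponent in the $i$-th summand of $(\ref{WSN})$ is $p(\delta-i)+n=p(k-i)$, so
\[
\|u\|_{W^{k,p}_{k-n/p}(M)}^{p}=\sum_{i=0}^{k}\int_{M}\chi^{\,p(k-i)}|\nabla^{i}u|^{p}\,d\vol_{g}.
\]
Since $0<\chi^{-1}\le1$, i.e. $\chi\ge1$, and $k-i\ge0$, every weight factor $\chi^{p(k-i)}$ is $\ge1$; comparing summand by summand with $(\ref{SobolevNorm})$ gives $\|u\|_{W^{k,p}(M)}\le\|u\|_{W^{k,p}_{k-n/p}(M)}$.

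For the reverse inequality I would first localize. Fix $\psi\in C^{\infty}(M\setminus\{x\})$ with $\psi\equiv1$ for $r\le\epsilon_{0}/8$ and $\psi\equiv0$ both for $r\ge\epsilon_{0}/4$ and on $M\setminus U_{x}$. By the triangle inequality for $\|\cdot\|_{W^{k,p}_{k-n/p}(M)}$ and the product rule it suffices to bound $\|\psi u\|_{W^{k,p}_{k-n/p}(M)}$ and $\|(1-\psi)u\|_{W^{k,p}_{k-n/p}(M)}$ each by $C\|u\|_{W^{k,p}(M)}$. On $\supp(1-\psi)$ the weight $\chi$ is bounded, so that piece is immediate. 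For $\psi u$, which is supported in the conical collar $\{r<\epsilon_{0}/4\}\times N$ carrying the metric $dr^{2}+r^{2}h_{r}$, the heart of the matter is the one-step estimate: for $0\le i\le k-1$, any $w\in C^{\infty}_{0}$ supported in that collar, and provided $p(k-i)\neq n$,
\[
\int\chi^{\,p(k-i)}|\nabla^{i}w|_{g}^{p}\,d\vol_{g}\;\le\;C\int\chi^{\,p(k-i-1)}|\nabla^{i+1}w|_{g}^{p}\,d\vol_{g}.
\]

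To prove this, set $v=|\nabla^{i}w|_{g}$, a compactly supported Lipschitz function of $(r,\theta)$. Because $g=dr^{2}+r^{2}h_{r}$ is block-diagonal with $g^{rr}=1$ one has $|\partial_{r}v|\le|\nabla v|_{g}$, while Kato's inequality $|\nabla|\nabla^{i}w||\le|\nabla^{i+1}w|$ gives $|\nabla v|_{g}\le|\nabla^{i+1}w|_{g}$; moreover $d\vol_{g}=\rho\,r^{n-1}\,dr\,d\vol_{h_{0}}$ with $\rho=\sqrt{\det h_{r}/\det h_{0}}$ bounded above and below on $(0,\epsilon_{0}/4]\times N$ — this is where the hypothesis $AC_{k-1}$ enters, guaranteeing that $dr^{2}+r^{2}h_{r}$ is a uniformly controlled perturbation of the exact cone metric $dr^{2}+r^{2}h_{0}$ (in fact the bound on $\rho$ already follows from $h_{r}=h_{0}+o(r^{\alpha})$). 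Applying the scalar weighted Hardy inequality $(\ref{WeightedHardyInequality})$ in the $r$-variable for each fixed $\theta$ with $a=p(k-i)+1-n$ (legitimate for compactly supported Lipschitz $v$ by a routine density argument, and requiring $a\neq1$, i.e. $p(k-i)\neq n$), and then reinserting $\rho$ via its two-sided bound, yields the displayed estimate exactly as in the computation $(\ref{ConeHardyInequality})$.

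Finally, iterating the one-step estimate from $i$ up to $k$ gives $\int\chi^{p(k-i)}|\nabla^{i}(\psi u)|^{p}d\vol_{g}\le C\int|\nabla^{k}(\psi u)|^{p}d\vol_{g}\le C\|u\|_{W^{k,p}(M)}^{p}$; the chain invokes Hardy with the exponents $p\cdot1,p\cdot2,\dots,p(k-i)$, and these are all $\neq n$ precisely by the hypothesis $p\,i\neq n$ for $1\le i\le k$ — this is exactly where that condition is needed. Summing over $i$ and combining with the $(1-\psi)u$ piece gives $\|u\|_{W^{k,p}_{k-n/p}(M)}\le C\|u\|_{W^{k,p}(M)}$, and the identification $W^{k,p}_{k-n/p}(M)=W^{k,p}(M)$ follows because both spaces are the completion of $C^{\infty}_{0}(M\setminus\{x\})$ in equivalent norms. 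I do not expect a genuine obstacle: the only points needing a little care are the density step letting one apply the scalar Hardy inequality to the merely Lipschitz function $|\nabla^{i}w|_{g}$, and the bookkeeping ensuring the $r$-dependence of $h_{r}$ affects only constants — neither is serious.
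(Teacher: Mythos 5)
Your proposal is correct and follows essentially the same route the paper indicates: the left inequality from $\chi\geq 1$ and the special exponent $p(\delta-i)+n=p(k-i)$, and the right inequality by combining the radial weighted Hardy inequality $(\ref{WeightedHardyInequality})$/$(\ref{ConeHardyInequality})$ with Kato's inequality, iterated over the exponents $p,2p,\dots,p(k-i)$ (which is exactly where $p\,i\neq n$ enters), together with the routine cutoff and volume-form comparison near the cone tip. The paper gives no further detail beyond ``Hardy plus Kato,'' so your localization and perturbation bookkeeping (including the observation that only the zeroth-order asymptotics of $h_{r}$ is really used there) simply fills in the steps the paper leaves implicit.
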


Even though we have obtained that some weighted Sobolev norms are equivalent to the usual Sobolev norms, sometimes it is still more convenient to use weighted Sobolev norms. For example, a certain homogeneity of weighted Sobolev norms on metric cones has been demonstrated to be very useful in \S 8 in \cite{DW17} and the proof of Proposition $\ref{WeightedSobolevEmbedding}$ below. Moreover, we only have equivalence between the usual Sobolev norms and weighted Sobolev norms for special weight indices $\delta=k-\frac{n}{p}$ with $k, p$, and $n$ satisfying certain conditions. However, in some problems, we have to use weighted Sobolev norms with more general weight indices, e.g. in \S4 and \S5.

Another application of the Hardy inequality obtained in $(\ref{ConeHardyInequality})$ is the following Sobolev inequality on the metric cone $(C(N)=(0, \infty)\times N^{n-1}, g=dr^{2}+r^{2}h)$.
\begin{lem}\label{ConeSobolevInequality}
For $1<p<n$, and any $u\in C^{\infty}_{0}(C(N))$, we have
\begin{equation}
\|u\|_{L^{q}(C(N))}\leq C\|\nabla u\|_{L^{p}(C(N))},
\end{equation}
for a constant $C$ only depending on the cross section $(N^{n-1}, h)$ and $p$, where $q=\frac{np}{n-p}$.
\end{lem}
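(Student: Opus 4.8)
We outline a proof. The plan is to pass from the cone to the cylinder $\R\times N$ by a conformal change, invoke the standard (non-homogeneous) Sobolev inequality there, and then use the cone Hardy inequality $(\ref{ConeHardyInequality})$ to absorb the resulting lower order term and recover the scale-invariant form. Set $r=e^{s}$, $s\in\R$, so that on $\R\times N$ we have the product metric $\bar g:=ds^{2}+h$ and $g=dr^{2}+r^{2}h=e^{2s}\bar g$. Hence $d\vol_{g}=r^{n}\,d\vol_{\bar g}$, and $|\nabla\phi|_{g}=r^{-1}|\nabla\phi|_{\bar g}$ for every function $\phi$; in particular $|\nabla s|_{\bar g}=1$. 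Given $u\in C^{\infty}_{0}(C(N))$, it corresponds to $w:=r^{\frac{n-p}{p}}u=e^{\frac{(n-p)s}{p}}u\in C^{\infty}_{0}(\R\times N)$, and since $q\cdot\frac{n-p}{p}=n$ one checks directly that
\[
\int_{C(N)}|u|^{q}\,d\vol_{g}=\int_{\R\times N}|w|^{q}\,d\vol_{\bar g},\qquad\text{i.e.}\qquad \|u\|_{L^{q}(C(N),g)}=\|w\|_{L^{q}(\R\times N,\bar g)}.
\]

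Next I would compare gradients. From $w=e^{\frac{(n-p)s}{p}}u$ and $|\nabla s|_{\bar g}=1$ we get $|\nabla w|_{\bar g}\le e^{\frac{(n-p)s}{p}}|\nabla u|_{\bar g}+\frac{n-p}{p}|w|$, hence $|\nabla w|_{\bar g}^{p}\le C(e^{(n-p)s}|\nabla u|_{\bar g}^{p}+|w|^{p})$ with $C=C(n,p)$. Using $|\nabla u|_{\bar g}^{p}=r^{p}|\nabla u|_{g}^{p}$, $d\vol_{\bar g}=r^{-n}\,d\vol_{g}$ and $r^{n-p}|u|^{p}=|w|^{p}$, integration gives $e^{(n-p)s}|\nabla u|_{\bar g}^{p}\,d\vol_{\bar g}=|\nabla u|_{g}^{p}\,d\vol_{g}$ and $|w|^{p}\,d\vol_{\bar g}=r^{-p}|u|^{p}\,d\vol_{g}$, so that
\[
\|\nabla w\|_{L^{p}(\bar g)}^{p}\le C\|\nabla u\|_{L^{p}(g)}^{p}+C\int_{C(N)}\frac{|u|^{p}}{r^{p}}\,d\vol_{g},\qquad \|w\|_{L^{p}(\bar g)}^{p}=\int_{C(N)}\frac{|u|^{p}}{r^{p}}\,d\vol_{g}.
\]
By the cone Hardy inequality $(\ref{ConeHardyInequality})$ with $k=1$ (admissible since $1<p<n$ forces $p\neq n$), $\int_{C(N)}r^{-p}|u|^{p}\,d\vol_{g}\le(\frac{p}{n-p})^{p}\|\nabla u\|_{L^{p}(g)}^{p}$, and therefore both $\|\nabla w\|_{L^{p}(\bar g)}$ and $\|w\|_{L^{p}(\bar g)}$ are bounded by $C\|\nabla u\|_{L^{p}(g)}$ with $C=C(n,p)$.

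Finally, $(\R\times N,\bar g)$ is a complete $n$-dimensional manifold of bounded geometry (a product of $\R$ with the compact $(N^{n-1},h)$), so it satisfies the Sobolev inequality $\|w\|_{L^{q}(\bar g)}\le C(\|\nabla w\|_{L^{p}(\bar g)}+\|w\|_{L^{p}(\bar g)})$ with $q=\frac{np}{n-p}$ and a constant depending only on $(N^{n-1},h)$ and $p$; this is standard and can, if desired, be proved elementarily by a uniformly locally finite cover by balls uniformly bi-Lipschitz to Euclidean balls together with the Euclidean Sobolev inequality and a partition of unity. Combining this with the previous paragraph yields $\|u\|_{L^{q}(C(N),g)}=\|w\|_{L^{q}(\bar g)}\le C\|\nabla u\|_{L^{p}(C(N),g)}$, as claimed. (No density argument is needed, since $u\in C^{\infty}_{0}(C(N))$ corresponds to $w\in C^{\infty}_{0}(\R\times N)$, which is exactly the class in which both the cylinder Sobolev inequality and $(\ref{ConeHardyInequality})$ are stated.)

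I do not expect a serious obstacle here; the content is bookkeeping: choosing the conformal weight $r^{(n-p)/p}$ so that the $L^{q}$--$L^{p}$ scaling of the cone matches the dimension-$n$ Sobolev exponent of the cylinder, and noticing that the lower order term $\|w\|_{L^{p}(\bar g)}$ forced by the cylinder inequality, as well as the cross term coming from the conformal change, reduce precisely to the Hardy quantity $\int_{C(N)}r^{-p}|u|^{p}\,d\vol_{g}$. This is the only step where the homogeneity of the cone and the restriction $p<n$ genuinely enter, and it is exactly what promotes the non-homogeneous Sobolev inequality on the cylinder to the scale-invariant one on $C(N)$. The one point that needs a word of care is the uniformity (in the size and location of $\supp w$) of the constant in the cylinder Sobolev inequality, which is guaranteed by the translation-invariant product structure of $\bar g$.
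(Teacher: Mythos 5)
Your argument is correct, and it takes a visibly different route from the paper's. The paper (following \cite{DY}) works directly on the cone: it covers the cross section $N$ by small pieces whose cones are quasi-isometric to subsets of $\mathbb{R}^{n}$, applies the homogeneous Euclidean Sobolev inequality to $\pi^{*}(\rho_{i})u$ for a partition of unity $\{\rho_{i}\}$ pulled back from $N$, and uses the Hardy inequality $(\ref{ConeHardyInequality})$ to absorb the error terms coming from $|\nabla(\pi^{*}\rho_{i})|\leq C_{i}r^{-1}$ as in $(\ref{PartitionOfUnityEstimate})$. You instead pass to the cylinder $(\R\times N, ds^{2}+h)$ by the conformal change $r=e^{s}$, choose the weight $r^{(n-p)/p}$ so that the critical $L^{q}$ norms match exactly, invoke the non-homogeneous Sobolev inequality on this complete bounded-geometry product (a standard fact, itself provable by a translation-invariant covering argument of the same flavor as the paper's), and then use $(\ref{ConeHardyInequality})$ with $k=1$ to convert both the conformal cross term and the zeroth-order cylinder term back into $\|\nabla u\|_{L^{p}(g)}$; your bookkeeping identities ($d\vol_{g}=r^{n}d\vol_{\bar g}$, $|\nabla\phi|_{g}=r^{-1}|\nabla\phi|_{\bar g}$, $q\frac{n-p}{p}=n$) all check out, and the constant depends only on $(N,h)$ and $p$ as required. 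What each approach buys: the paper's covering argument is self-contained modulo the Euclidean inequality and needs no facts about non-compact Sobolev embeddings, while yours cleanly isolates where scale invariance and $p<n$ enter (the choice of conformal weight and the Hardy absorption) and adapts readily to weighted variants on the cone; its only external input is the uniformity of the cylinder Sobolev constant, which, as you note, follows from the translation-invariant product structure.
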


{\em Sketch of the proof of Lemma $\ref{ConeSobolevInequality}$}:
The Sobolev inequality in Lemma $\ref{ConeSobolevInequality}$ has been established in \cite{DY} for the case $p=2$. And no essential difference between $p=2$ case and general case in Lemma $\ref{ConeSobolevInequality}$. For proof we refer to \cite{DY}. The basic idea is to choose a finite sufficiently small open cover for the cross section $(N^{n-1}, h)$ so that each piece can be embedded into Euclidean unit sphere $\mathbb{S}^{n-1}$ and the metric $h$ restricted onto each small piece is equivalent to standard metric on the Euclidean unit sphere. Then on the cone over each small piece the metric $h$ is equivalent to standard Euclidean metric on $\mathbb{R}^{n}$. Then we choose a partition of unity $\{\rho_{i}\}^{N}_{i=1}$ subject to the open cover chose for $(N^{n-1}, h)$. If we let $\pi: C(N)\rightarrow N$ be the natural projection. Then an important observation pointed in \cite{DY} is the pointwise estimate:
\begin{equation}\label{PartitionOfUnityEstimate}
|\nabla(\pi^{*}\rho_{i})|(r, \theta)\leq C_{i}r^{-1}
\end{equation}
where $C_{i}$ is a constant, and $\nabla$ is the covariant derivative with respect to $g=dr^{2}+r^{2}h$ on the cone. Then combining $(\ref{ConeHardyInequality})$ and $(\ref{PartitionOfUnityEstimate})$, one can easily obtain Sobolev inequality in Lemma $\ref{ConeSobolevInequality}$.

By applying the Kato's inequality again, Lemma $\ref{ConeSobolevInequality}$ implies the following Sobolev inequalities and Sobolev embedding on compact Riemannian manifolds with isolated conical singularities.
\begin{prop}\label{SobolevInequality1}
Let $(M^{n}, g, x)$ be a compact Riemannian manifold with a single conical singularity at $x$. For each $1<p<n$, we have
\begin{enumerate}
\item for any $u\in C^{\infty}_{0}(M\setminus\{x\})$
\begin{equation}
 \|u\|_{W^{l, q}(M)}\leq C(M, g, p, k)\|u\|_{W^{k, p}(M)},
\end{equation}
for any $1\leq q\leq q_{l}$, where $C(M, g, p, k)$ is a constant, and $l<k$ and $q_{l}$ satisfy $\frac{1}{q_{l}}=\frac{1}{p}-\frac{k-l}{n}>0$,
\item hence continuous embedding $W^{k, p}(M)\subset W^{l, q}(M)$, for any $1\leq q\leq q_{l}$,
\end{enumerate}
\end{prop}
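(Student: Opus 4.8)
The plan is to deduce everything from one first-order Sobolev inequality on $M$ (with a lower-order term) together with the cone Sobolev inequality of Lemma~\ref{ConeSobolevInequality}, and then to iterate.

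First I would establish the basic estimate: for every $u\in C^{\infty}_{0}(M\setminus\{x\})$,
\[
\|u\|_{L^{p^{*}}(M)}\le C\left(\|\nabla u\|_{L^{p}(M)}+\|u\|_{L^{p}(M)}\right),\qquad \frac{1}{p^{*}}=\frac{1}{p}-\frac{1}{n}.
\]
Fix a cutoff $\phi=\phi(r)\in C^{\infty}(M\setminus\{x\})$ supported in the conical neighbourhood $U_{x}$, with $\phi\equiv1$ for $r$ small and $|\nabla\phi|$ bounded, and set $\psi=1-\phi$. Split $u=\psi u+\phi u$. The function $\psi u$ has support in a compact subset of the smooth manifold $M\setminus\{x\}$, so the classical Sobolev inequality (via a finite atlas, or after embedding into a closed manifold) gives $\|\psi u\|_{L^{p^{*}}}\le C(\|\nabla(\psi u)\|_{L^{p}}+\|\psi u\|_{L^{p}})$. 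For $\phi u$, after shrinking the support of $\phi$ so that on it the conical metric $g=dr^{2}+r^{2}h_{r}$ is uniformly equivalent to the exact cone metric $g_{0}=dr^{2}+r^{2}h_{0}$ — possible since $h_{r}\to h_{0}$ as $r\to0$ — I extend $\phi u$ by zero to the infinite cone $(C(N),g_{0})$ (the extension lies in $C^{\infty}_{0}(C(N))$ because $u$ vanishes near $x$) and apply Lemma~\ref{ConeSobolevInequality} to obtain $\|\phi u\|_{L^{p^{*}}}\le C\|\nabla(\phi u)\|_{L^{p}}$. Adding the two pieces and using $|\nabla(\phi u)|\le|\nabla u|+|\nabla\phi|\,|u|$ with $|\nabla\phi|$ bounded yields the claim.

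Next I would pass from scalar functions to covariant derivatives using Kato's inequality. For $u\in C^{\infty}_{0}(M\setminus\{x\})$ and $0\le i\le k-1$, the function $|\nabla^{i}u|$ is Lipschitz with compact support in $M\setminus\{x\}$, hence lies in $W^{1,p}(M)$ with $|\nabla|\nabla^{i}u||\le|\nabla^{i+1}u|$ a.e. Applying the basic estimate (extended to $W^{1,p}(M)$ by density) to $v=|\nabla^{i}u|$ and summing over $i$ gives $\|u\|_{W^{k-1,p_{1}}(M)}\le C\|u\|_{W^{k,p}(M)}$ with $\frac{1}{p_{1}}=\frac{1}{p}-\frac{1}{n}$. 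Iterating the analogous step $W^{k-j,p_{j}}(M)\subset W^{k-j-1,p_{j+1}}(M)$, where $\frac{1}{p_{j}}=\frac{1}{p}-\frac{j}{n}$, for $j=0,1,\dots,k-l-1$ produces $W^{k,p}(M)\subset W^{l,q_{l}}(M)$; the hypothesis $\frac{1}{q_{l}}=\frac{1}{p}-\frac{k-l}{n}>0$ is exactly what ensures $1<p_{j}<n$ at each stage, so the basic estimate is always applicable. Finally, since $M$ has finite volume ($\int_{0}^{\epsilon_{0}}r^{n-1}\,dr<\infty$ near the singularity), Hölder's inequality gives $L^{q_{l}}(M)\subset L^{q}(M)$ for every $1\le q\le q_{l}$, hence $W^{l,q_{l}}(M)\subset W^{l,q}(M)$; this proves (1), and (2) follows because $W^{k,p}(M)$ is by definition the completion of $C^{\infty}_{0}(M\setminus\{x\})$ in its norm, so the inequality in (1) extends by density to a bounded inclusion.

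The only step I expect to require real care is the basic first-order estimate near the singularity — this is precisely where Lemma~\ref{ConeSobolevInequality}, and through it the Hardy inequality $(\ref{ConeHardyInequality})$ on model cones, carries the load; one must check that the support of $\phi$ can be shrunk enough to make $g$ and $g_{0}$ comparable so that $\phi u$ may be transplanted to the model cone. Everything else — the Nirenberg-type iteration, the use of Kato's inequality to handle covariant derivatives, and the finite-volume interpolation in the range $q\le q_{l}$ — is routine.
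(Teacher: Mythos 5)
Your proposal is correct and follows essentially the same route the paper intends: the paper derives Proposition \ref{SobolevInequality1} in one line from Lemma \ref{ConeSobolevInequality} together with Kato's inequality, and your argument simply fills in the expected details (cutoff decomposition into a conical piece handled by the cone Sobolev inequality and a compact piece handled by the classical one, Kato's inequality applied to $|\nabla^{i}u|$, the Nirenberg-type iteration, and H\"older on the finite-volume manifold for $q\leq q_{l}$). The check that $1<p_{j}<n$ at every stage follows from $\frac{1}{q_{l}}>0$ exactly as you say, so there is no gap.
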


Thus, the Sobolev embedding on compact manifolds with isolated conical singularities relies on the weighted $L^{p}$-Hardy inequality $(\ref{WeightedHardyInequality})$ for $p>1$, which is known not to be true in the case of $p=1$. So in general, we do not have Sobolev embeddings on manifolds with isolated conical singularities in the case of $p=1$. However, in the following, we will see that one always has weighted Sobolev embeddings for all $p\geq1$ on compact manifolds with isolated conical singularities. The key idea is to use a homogeneity of weighted Sobolev norms on metric cones.

\begin{prop}\label{WeightedSobolevEmbedding}
Let $(M^{n}, g, x)$ be a compact Riemannian manifold with a single conical singularity at $x$ satisfying the condition $AC_{k-1}$ defined in $(\ref{AC})$. For each $1\leq p<n$, $\delta\in\mathbb{R}$, we have for any $u\in C^{\infty}_{0}(M\setminus\{x\})$
\begin{equation}\label{WeightedSobolevInequality}
\|u\|_{W^{l, q}_{\delta}(M)}\leq C(g, n, p, k, l)\|u\|_{W^{k, p}_{\delta}(M)},
\end{equation}
for any $1\leq q\leq q_{l}$, where $C(M, g, p, k)$ is a constant, and $l<k$ and $q_{l}$ satisfy $\frac{1}{q_{l}}=\frac{1}{p}-\frac{k-l}{n}>0$. And therefore, we have continuous embeddings $W^{k, p}_{\delta}(M)\subset W^{l, q}_{\delta}(M)$, for $l<k$ and $q\leq q_{l}$.
\end{prop}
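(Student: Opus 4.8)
\noindent The plan is to localize the inequality and, near the singularity, to reduce it to the classical Sobolev embedding on a fixed annulus by exploiting the exact dilation symmetry of a model cone. Fix a cutoff $\eta\in C^{\infty}(M\setminus\{x\})$ with $\eta\equiv 1$ for $r<\epsilon_{0}/8$ and $\supp\eta\subset\{r<\epsilon_{0}/4\}$, and write $u=\eta u+(1-\eta)u$. The piece $(1-\eta)u$ is supported in a fixed relatively compact piece of the smooth manifold $M\setminus\{x\}$, on which the weight $\chi$ is bounded above and below by positive constants, so there the weighted norms $W^{m,s}_{\delta}$ are equivalent to the ordinary norms $W^{m,s}$ and the estimate is just the usual Sobolev embedding on a compact set, which holds for every $s\ge 1$ (one may also quote Proposition \ref{SobolevInequality1} when $p>1$). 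Since $\chi$ and the derivatives of $\eta$ are bounded where $\eta$ is non-constant, the Leibniz rule gives $\|\eta u\|_{W^{k,p}_{\delta}}+\|(1-\eta)u\|_{W^{k,p}_{\delta}}\le C\|u\|_{W^{k,p}_{\delta}}$, so it suffices to prove $(\ref{WeightedSobolevInequality})$ for $w:=\eta u$, which is supported in the conical neighborhood $\{0<r<\epsilon_{0}/4\}\times N$.

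\noindent On that neighborhood I would decompose dyadically. Take annular shells $S_{j}=\{2^{-j-2}\epsilon_{0}<r<2^{-j}\epsilon_{0}\}\times N$ for $j\ge 1$, of bounded overlap, and a subordinate partition of unity $\{\phi_{j}\}$ with $\sum_{j}\phi_{j}\equiv 1$ on $\{0<r<\epsilon_{0}/4\}$, obtained by dilating a fixed pair of bump functions in the variable $\log r$; this produces the scale-invariant bounds $|\nabla^{m}\phi_{j}|\le C_{m}r^{-m}=C_{m}\chi^{m}$ with $C_{m}$ independent of $j$. Let $\lambda_{j}=2^{j}/\epsilon_{0}$ and let $\Phi_{\lambda_{j}}(r,\theta)=(\lambda_{j}r,\theta)$ be the cone dilation carrying $S_{j}$ onto the fixed shell $S_{0}=\{1/4<\rho<1\}\times N$; set $\widetilde{w}_{j}=(\Phi_{\lambda_{j}}^{-1})^{*}(\phi_{j}w)$, a function supported in $S_{0}$. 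Pulling $g|_{S_{j}}$ back to $S_{0}$ by $\Phi_{\lambda_{j}}^{-1}$ and rescaling by $\lambda_{j}^{2}$ produces the metric $g_{j}:=d\rho^{2}+\rho^{2}h_{\rho/\lambda_{j}}$ on $S_{0}$. This is where the hypothesis $AC_{k-1}$ from $(\ref{AC})$ is used: it forces $g_{j}\to d\rho^{2}+\rho^{2}h_{0}$ in $C^{k}$ on the fixed compact set $\overline{S_{0}}$, so that the $g_{j}$ are uniformly equivalent with uniformly controlled connection and curvature.

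\noindent The heart of the matter is the homogeneity of the weighted norm on the cone. Since $\Phi_{\lambda_{j}}^{-1}$ pulls $g$ back to $\lambda_{j}^{-2}g_{j}$, a constant conformal multiple of $g_{j}$, the Levi--Civita connection is unchanged while the pointwise norm of an $i$-tensor scales by $\lambda_{j}^{i}$, the volume by $\lambda_{j}^{-n}$, and $\chi$ by $\lambda_{j}$; a term-by-term bookkeeping then shows that the powers of $\lambda_{j}$ conspire so that $\|\phi_{j}w\|_{W^{k,p}_{\delta}(S_{j},g)}=\lambda_{j}^{\delta}\|\widetilde{w}_{j}\|_{W^{k,p}_{\delta}(S_{0},g_{j})}$, the exponent $\delta$ being independent both of the number of derivatives and of the integrability exponent, and likewise $\|\phi_{j}w\|_{W^{l,q}_{\delta}(S_{j},g)}=\lambda_{j}^{\delta}\|\widetilde{w}_{j}\|_{W^{l,q}_{\delta}(S_{0},g_{j})}$ with the same power of $\lambda_{j}$. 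On $S_{0}$ the weight $\chi$ is comparable to $1$, so there the weighted and unweighted norms are equivalent with $j$-independent constants; combining this with the classical Sobolev embedding $W^{k,p}(S_{0},g_{j})\hookrightarrow W^{l,q}(S_{0},g_{j})$ (valid for $1\le q\le q_{l}$, with a constant controlled by the uniform geometry of the $g_{j}$, hence uniform in $j$) yields the per-shell estimate $\|\phi_{j}w\|_{W^{l,q}_{\delta}(S_{j},g)}\le C\|\phi_{j}w\|_{W^{k,p}_{\delta}(S_{j},g)}$ with $C$ independent of $j$. The cancellation of the $\lambda_{j}^{\delta}$ factors on the two sides is exactly what makes $C$ uniform, and is the reason the argument needs no hypothesis such as $pi\ne n$: it never invokes the weighted Hardy inequality, so it remains valid down to $p=1$.

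\noindent To finish I would sum over $j$. The Leibniz rule together with $|\nabla^{m}\phi_{j}|\le C_{m}\chi^{m}$ gives $\|\phi_{j}w\|_{W^{k,p}_{\delta}(S_{j},g)}\le C\|w\|_{W^{k,p}_{\delta}(S_{j})}$ (the weighted norm of $w$ integrated over $S_{j}$), and by bounded overlap $\sum_{j}\|w\|^{p}_{W^{k,p}_{\delta}(S_{j})}\le C\|w\|^{p}_{W^{k,p}_{\delta}(M)}$; on the other side bounded overlap and convexity give $\|w\|^{q}_{W^{l,q}_{\delta}(M)}\le C\sum_{j}\|\phi_{j}w\|^{q}_{W^{l,q}_{\delta}(S_{j},g)}$. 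Feeding in the per-shell estimate and, for $q\ge p$, the elementary bound $\sum_{j}a_{j}^{q}\le(\sum_{j}a_{j}^{p})^{q/p}$ for non-negative sequences yields $\|w\|_{W^{l,q}_{\delta}(M)}\le C\|w\|_{W^{k,p}_{\delta}(M)}$, whence the asserted continuous embedding by density of $C^{\infty}_{0}(M\setminus\{x\})$. The step I expect to cost the most work is the uniformity claim above: one must check carefully that $AC_{k-1}$ really does give uniform $C^{k}$-control of the rescaled metrics $g_{j}$ on $\overline{S_{0}}$, and that the Sobolev constant, the weighted/unweighted comparison, and the partition-of-unity bounds are all independent of $j$, since it is this uniformity — together with the scaling of the weighted norm by the single power $\lambda_{j}^{\delta}$ — that allows the local estimates to be summed into the global one.
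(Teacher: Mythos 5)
Your proposal is correct and follows essentially the same route as the paper: reduction to the conical neighborhood, a dyadic decomposition into annuli, the scaling homogeneity of the weighted norms (with the single factor $\lambda_j^{\pm\delta}$ appearing on both sides and cancelling, which is why no condition like $p\,i\neq n$ is needed), the classical Sobolev embedding on a fixed compact annulus, and summation over scales. The extra details you supply --- uniform control of the rescaled metrics via $AC_{k-1}$ and the $\ell^{p}\subset\ell^{q}$ step $\sum_{j}a_{j}^{q}\le\bigl(\sum_{j}a_{j}^{p}\bigr)^{q/p}$ in the final summation --- are precisely the points the paper's terse proof leaves implicit (it passes immediately to the exact cone metric and sums the annulus norms directly), so your write-up is compatible with, and if anything slightly more careful than, the published argument.
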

\begin{proof}
Clearly, it suffices to show the weighted Sobolev inequality $(\ref{WeightedSobolevInequality})$ on a finite metric cone $(C_{\epsilon}(N)=(0, \epsilon)\times N, g=dr^{2}+r^{2}h_{0})$ over the smooth compact Riemannian manifold $(N^{n-1}, h_{0})$.

Similarly as in the proof of Lemma 8.1 in \cite{DW17}, let $u(r, \theta)\in C^{\infty}_{0}(C_{\epsilon}(N))$, where $\theta$ is a local coordinate on $N$, and set
\begin{equation}
u_{a}(r, \theta)=u(ar, \theta),
\end{equation}
for a positive constant $a$. Let $C_{r_{1}, r_{2}}=(r_{1}, r_{2})\times N$ be an annulus on the finite metric cone $C_{\epsilon}(N)$, for $0\leq r_{1}<r_{2}\leq\epsilon$. Then by a simple change of variable, we can see
\begin{equation}\label{SobolevHomogeneity}
\|u\|_{W^{k, p}_{\delta}(C_{ar_{1}, ar_{2}})}=a^{-\delta}\|u\|_{W^{k, p}_{\delta}(C_{r_{1}, r_{2}})}.
\end{equation}
Thus,
\begin{equation}
\begin{aligned}
\|u\|_{W^{l, q}_{\delta}(C_{\epsilon})}
&=\sum^{\infty}_{j=0}\|u\|_{W^{l, q}_{\delta}(C_{(\frac{1}{2})^{j+1}\epsilon, (\frac{1}{2})^{j}\epsilon})}\\
&=\sum^{\infty}_{j=0}(\frac{1}{2})^{-j\delta}\|u_{(\frac{1}{2})^{j}}\|_{W^{l, q}_{\delta}(C_{\frac{1}{2}\epsilon, \epsilon})}\\
&\leq C(\epsilon)\sum^{\infty}_{j=0}(\frac{1}{2})^{-j\delta}\|u_{(\frac{1}{2})^{j}}\|_{W^{k, p}_{\delta}(C_{\frac{1}{2}\epsilon, \epsilon})}\\
&=C(\epsilon)\sum^{\infty}_{j=0}\|u\|_{W^{k, p}_{\delta}(C_{(\frac{1}{2})^{j+1}\epsilon, (\frac{1}{2})^{j}\epsilon})}\\
&=C(\epsilon)\|u\|_{W^{l, q}_{\delta}(C_{\epsilon})},
\end{aligned}
\end{equation}
where $C(\epsilon)$ is a constant depending on $\epsilon$ but not depending on $j$.
Here, for the above inequality, we used the usual Sobolev inequality on the compact manifold $\overline{C_{\frac{1}{2}\epsilon, \epsilon}}=([\frac{1}{2}\epsilon, \epsilon]\times N, dr^{2}+r^{2}h_{0})$ with boundary.
\end{proof}

\begin{remark}
In Theorem 3.3 in \cite{BP03}, on a compact Riemannian manifold with isolated tame conical singularities $M^{n}$ of dimension $n$, the continuous embedding $W^{k, 2}_{1-\frac{n}{2}}(M^{n})\subset L^{q}(M^{n})$ for $2\leq q\leq\frac{2n}{n-2}$ with $n\geq5$ has been shown, and these can be considered as special cases of Proposition $\ref{WeightedSobolevEmbedding}$, since $\|u\|_{L^{q}(M)}\leq\|u\|_{W^{0, q}_{1-\frac{n}{2}}(M)}$ for all $2\leq q\leq\frac{2n}{n-2}$ with $n\geq5$.
\end{remark}

Finally, we show the following compactness property for a weighted Sobolev embedding obtained in Proposition $\ref{WeightedSobolevEmbedding}$. This compactness property will be used in showing the existence of the minimizer of the $W$-functional.

\begin{prop}\label{CompactWeightedSobolevEmbedding}
Let $(M^{n}, g, x)$ be a compact Riemannian manifold with a single conical singularity at $x$. The embedding $W^{1, 1}_{1-n}(M)\subset L^{q}(M)$ is compact for any $1\leq q<\frac{n}{n-1}$.
\end{prop}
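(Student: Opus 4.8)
The plan is to upgrade the continuous endpoint embedding furnished by Proposition~\ref{WeightedSobolevEmbedding} into a compact embedding strictly below the endpoint, by working separately near and away from the singularity: near $x$, the embedding into $L^{\frac{n}{n-1}}$ combined with the fact that the conical volume shrinks will force the $L^{q}$-``tail'' of a bounded sequence to be uniformly small, while away from $x$ one is on a compact manifold with boundary where the classical Rellich--Kondrachov theorem applies; a diagonal argument over an exhaustion of $M_{0}$ then glues the two. To set this up, I would first apply Proposition~\ref{WeightedSobolevEmbedding} with $k=1$, $p=1$, $l=0$, $\delta=1-n$ (here $k-1=0$, so the hypothesis $AC_{0}$ is vacuous and no extra assumption on $(M,g,x)$ is needed), obtaining the continuous inclusion $W^{1,1}_{1-n}(M)\subset W^{0,\frac{n}{n-1}}_{1-n}(M)$. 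Since the weight exponent appearing in $(\ref{WSN})$ is here $\frac{n}{n-1}(1-n)+n=0$, one has $W^{0,\frac{n}{n-1}}_{1-n}(M)=L^{\frac{n}{n-1}}(M)$, so in fact $W^{1,1}_{1-n}(M)\hookrightarrow L^{\frac{n}{n-1}}(M)$ continuously; by density it suffices to work with $u_{j}\in C^{\infty}_{0}(M\setminus\{x\})$, and a bounded sequence $\{u_{j}\}$ in $W^{1,1}_{1-n}(M)$ then satisfies $\|u_{j}\|_{L^{\frac{n}{n-1}}(M)}\leq\Lambda$.

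Next I would prove the tail estimate. Fix a conical chart $U_{x}\cong(0,\epsilon_{0})\times N$ and, for $0<\rho<\epsilon_{0}$ small, set $C_{\rho}=(0,\rho)\times N$. The volume element on $U_{x}$ is $r^{n-1}\sqrt{\det h_{r}}\,dr\,d\vol_{N}$ with $h_{r}\to h_{0}$ as $r\to0$, so $\vol(C_{\rho})\leq C\rho^{n}$. For $1\leq q<\frac{n}{n-1}$, H\"older's inequality with conjugate exponents $\frac{n}{q(n-1)}$ and $\frac{n}{\,n-q(n-1)\,}$ yields
\begin{equation*}
\|u\|_{L^{q}(C_{\rho})}\leq\|u\|_{L^{\frac{n}{n-1}}(C_{\rho})}\,\vol(C_{\rho})^{\frac{n-q(n-1)}{nq}}\leq C'\,\Lambda\,\rho^{\frac{n-q(n-1)}{q}},
\end{equation*}
and the exponent of $\rho$ is positive precisely because $q<\frac{n}{n-1}$; hence the tail $\|u_{j}\|_{L^{q}(C_{\rho})}$ is small uniformly in $j$ once $\rho$ is small.

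Away from the singularity I would use that $\chi\geq1$ gives $\|u\|_{W^{1,1}(M\setminus C_{\rho})}\leq\|u\|_{W^{1,1}_{1-n}(M)}$, that $M\setminus C_{\rho}$ is a compact smooth manifold with boundary (the slice $\{r=\rho\}\times N$), and that the $p=1$ Rellich--Kondrachov theorem gives the compact embedding $W^{1,1}\hookrightarrow\hookrightarrow L^{q}$ there for $1\leq q<\frac{n}{n-1}$ (the endpoint $q=\frac{n}{n-1}$ being excluded); so for each fixed $\rho$ a subsequence of $\{u_{j}\}$ converges in $L^{q}(M\setminus C_{\rho})$. Taking $\rho_{m}\searrow0$, applying this successively, and diagonalizing produces a single subsequence $\{u_{j_{l}}\}$ that converges in $L^{q}(M\setminus C_{\rho_{m}})$ for every $m$. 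To conclude, given $\varepsilon>0$ I would choose $m$ so that $\sup_{l}\|u_{j_{l}}\|_{L^{q}(C_{\rho_{m}})}<\varepsilon$ (tail estimate) and then $L$ so that $\|u_{j_{l}}-u_{j_{k}}\|_{L^{q}(M\setminus C_{\rho_{m}})}<\varepsilon$ for $l,k\geq L$; using the additivity $\|f\|_{L^{q}(M)}^{q}=\|f\|_{L^{q}(M\setminus C_{\rho_{m}})}^{q}+\|f\|_{L^{q}(C_{\rho_{m}})}^{q}$ together with the triangle inequality, $\{u_{j_{l}}\}$ is Cauchy, hence convergent, in $L^{q}(M)$, which is the desired compactness.

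I expect the crucial point to be the uniform smallness of the tail: it relies on having the \emph{critical} continuous embedding $W^{1,1}_{1-n}(M)\hookrightarrow L^{\frac{n}{n-1}}(M)$ from Proposition~\ref{WeightedSobolevEmbedding} together with $\vol(C_{\rho})\to0$, and on the strict inequality $q<\frac{n}{n-1}$ — the same borderline effect that makes Rellich--Kondrachov fail at the critical exponent. One must also take care to invoke the $p=1$ version of Rellich--Kondrachov on $M\setminus C_{\rho}$ rather than the more familiar $p>1$ one, and to note that all inequalities for elements of the completion follow from the smooth case by density.
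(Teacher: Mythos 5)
Your argument is correct, but it follows a genuinely different route from the paper's. You localize by an exhaustion: the critical continuous embedding $W^{1,1}_{1-n}(M)\hookrightarrow L^{\frac{n}{n-1}}(M)$ (obtained from Proposition \ref{WeightedSobolevEmbedding} with $k=1$, $p=1$, $\delta=1-n$, where indeed $\frac{n}{n-1}(1-n)+n=0$ and $AC_{0}$ is vacuous), combined with H\"older and $\vol(C_{\rho})\lesssim\rho^{n}$, makes the $L^{q}$-tails near the tip uniformly small precisely because $q<\frac{n}{n-1}$, while on the smooth compact pieces $M\setminus C_{\rho}$ you invoke the $p=1$ Rellich--Kondrachov theorem (using $\chi\geq1$ to control the unweighted $W^{1,1}$-norm) and diagonalize over $\rho_{m}\searrow0$. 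The paper instead never leaves the conical chart uncontrolled: it cuts off with a radial $\phi_{1}$ supported in a conical neighborhood, covers the cross section $N$ by pieces quasi-isometric to subsets of $\mathbb{S}^{n-1}$ so that the cones over them map quasi-isometrically to the Euclidean unit ball, transplants $\pi^{*}(\rho_{i})\phi_{1}u_{m}$ there — the key point being $|\nabla\pi^{*}\rho_{i}|\leq C r^{-1}$, which is absorbed exactly by the weight $\chi\sim r^{-1}$ in the $W^{1,1}_{1-n}$-norm — and then applies the compact embedding $W^{1,1}(\overline{B_{1}(0)})\hookrightarrow\hookrightarrow L^{q}$ on each of the finitely many charts (plus charts for the regular part), extracting a subsequence finitely many times. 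Your version is shorter, isolates where strict subcriticality $q<\frac{n}{n-1}$ enters, and avoids charting the cone near the tip, at the price of needing the endpoint embedding and a countable diagonal argument; the paper's version is more self-contained near the singularity (only Euclidean compactness plus the weight), reuses the same covering technique as the cone Sobolev inequality of \cite{DY}, and yields Cauchyness in $L^{q}(M)$ directly without a tail estimate. Both are valid proofs of the proposition.
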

\begin{proof}
The embedding follows from $W^{1, 1}_{1-n}(M)\subset W^{0, q}_{1-n}\subset L^{q}(M)$ for $1\leq q<\frac{n}{n-1}$. The first inclusion is given in Proposition $\ref{WeightedSobolevEmbedding}$. And the second inclusion follows from $q(1-n)+n>0$ and the definition of weighted Sobolev norms $(\ref{WSN})$. Thus, we only need to show the compactness of the embedding. For that we will use the idea of the proof of Lemma $\ref{ConeSobolevInequality}$ described right after the lemma.

Choose $0<\epsilon<\frac{\epsilon_{0}}{10}$ sufficiently small so that $C_{3\epsilon}(N)=(0, 3\epsilon)\times N\subset M$ is a conical neighborhood of $x$, and
\begin{equation}
\frac{1}{2}(g_{0}=dr^{2}+r^{2}h_{0})|_{C_{2\epsilon}(N)}\leq (g=dr^{2}+r^{2}h_{r})|_{C_{2\epsilon}(N)}\leq 2(g_{0}=dr^{2}+r^{2}h_{0})|_{C_{2\epsilon}(N)}.
\end{equation}
Then choose a smooth function $\phi_{1}$ on $M\setminus\{x\}$ with $\phi_{1}\equiv1$ on $C_{\epsilon}(N)\subset M\setminus\{x\}$, $\supp(\phi_{1})\subset C_{2\epsilon}(N)$, $0\leq\phi_{1}\leq1$, and $\phi_{1}|_{C_{2\epsilon}(N)}=\phi_{1}(r, \theta)$ is a radial function, i.e. only depends on $r$. And set $\phi_{2}=1-\phi_{1}$ on $M\setminus\{x\}$.

Let $\{u_{m}\}^{\infty}_{m=1}\subset W^{1, 1}_{1-n}(M)$ be a bounded sequence, i.e.
\begin{equation}
\|u_{m}\|_{W^{1, 1}_{1-n}(M)}=\int_{M}(|\nabla u_{m}|+\chi|u_{m}|)d\vol_{g}\leq A,
\end{equation}
for some uniform constant $A$, where $\chi$ is the weight function given in $(\ref{WeightFunction})$.

We choose a finite sufficiently small open cover $\{U_{i}\}^{i_{0}}_{i=1}$ of $N^{n-1}$, such that $U_{i}$ can be embedded into the Euclidean unit sphere $\mathbb{S}^{n-1}$, and
\begin{equation}
\frac{1}{2}g_{\mathbb{S}^{n-1}}\leq h_{0}|_{U_{i}}\leq 2g_{\mathbb{S}^{n-1}},
\end{equation}
for all $1\leq i\leq i_{0}$. Consequently, $C_{2\epsilon}(U_{i})=(0, 2\epsilon)\times N$ can be embedded into $\mathbb{R}^{n}$ as $\Phi_{i}: C_{2\epsilon}(U_{i})\rightarrow B_{1}(0)\subset\mathbb{R}^{n}$, and
\begin{equation}
\frac{1}{4}\Phi^{*}_{i}(g_{\mathbb{R}^{n}})\leq(g=dr^{2}+r^{2}h_{r})|_{C_{2\epsilon}(U_{i})}\leq4\Phi^{*}_{i}(g_{\mathbb{R}^{n}}),
\end{equation}
for all $1\leq i\leq i_{0}$, where $B_{1}(0)$ is the unit ball centered at the origin in $\mathbb{R}^{n}$.

We also choose a partition of unity $\{\rho_{i}\}^{i_{0}}_{i=1}$ subject to the open cover $\{U_{i}\}^{i_{0}}_{i=1}$ of $N^{n-1}$. Then for each $1\leq i\leq i_{0}$, and $m\in\mathbb{N}$, $(\pi^{*}(\rho_{i})\cdot\phi_{1}\cdot u_{m})\circ\Phi^{-1}_{i}\in C^{\infty}_{0}(\overline{B_{1}(0)})$, and
\begin{align*}
&\ \ \ \ \ \|(\pi^{*}(\rho_{i})\cdot\phi_{1}\cdot u_{m})\circ\Phi^{-1}_{i}\|_{W^{1, 1}(\overline{B_{1}(0)})}\\
&=\int_{\overline{B_{1}(0)}}\left(|\nabla((\pi^{*}(\rho_{i})\cdot\phi_{1}\cdot u_{m})\circ\Phi^{-1}_{i})|_{g_{\mathbb{R}^{n}}}+|(\pi^{*}(\rho_{i})\cdot\phi_{1}\cdot u_{m})\circ\Phi^{-1}_{i}|\right)d\vol_{g_{\mathbb{R}^{n}}}\\
&\leq\int_{\overline{B_{1}(0)}}[\left(4|\nabla(\pi^{*}(\rho_{i}))|_{g}|\phi_{1}\cdot u_{m}|\right)\circ\Phi^{-1}_{i}+\left(\pi^{*}(\rho_{i})4|\nabla(\phi_{1}\cdot u_{m})|_{g}\right)\circ\Phi^{-1}_{i}]d\vol_{g_{\mathbb{R}^{n}}}\\
&\ \ \ \ +\int_{\overline{B_{1}(0)}}|(\pi^{*}(\rho_{i})\cdot\phi_{1}\cdot u_{m})\circ\Phi^{-1}_{i}|d\vol_{g_{\mathbb{R}^{n}}}\\
&\leq4^{n+1}C\int_{C_{2\epsilon}(N)}\left(\frac{1}{r}|u_{m}|+|\nabla u_{m}|_{g}+|u_{m}|\right)d\vol_{g}\\
&\leq4^{n+1}C\int_{M}(|\nabla u|_{g}+\chi|u_{m}|)d\vol_{g}\\
&=4^{n+1}C\|u_{m}\|_{W^{1, 1}_{1-n}}\leq4^{n+1}C\cdot A,
\end{align*}
where $C$ and $A$ are constants independent of $m$ and $i$.

Then we choose a finite open cover $\{V_{j}\}^{j_{0}}_{j=1}$ for the compact manifold $M\setminus C_{\epsilon}(N)$ with smooth boundary $(N, \epsilon^{2}h_{\epsilon})$ such that the metric $g$ on $M$ restricted on each $V_{j}$ is quasi-isometric to the standard n-dimensional unit ball or a subset of the unit ball, say $\Psi_{j}: V_{j}\rightarrow B_{1}(0)\subset\mathbb{R}^{n}$. We also choose a partition of unity $\{\psi_{j}\}^{j_{0}}_{j=1}$ subject to the open cover. Then for each $1\leq j\leq j_{0}$, and $m\in\mathbb{N}$, $(\psi_{j}\cdot\phi_{2}\cdot u_{m})\circ\Psi^{-1}_{j}\in C^{\infty}_{0}(\overline{B_{1}(0)})$, and
\begin{equation}
\|(\psi_{j}\cdot\phi_{2}\cdot u_{m})\circ\Psi^{-1}_{j}\|_{W^{1, 1}(\overline{B_{1}(0)})}\leq C^{\prime}\|u_{m}\|_{W^{1, 1}(M)}\leq C^{\prime}\|u_{m}\|_{W^{1, 1}_{1-n}(M)}\leq C^{\prime}\cdot A,
\end{equation}
for constants $C^{\prime}$ and $A$ independent of $m$ and $i$.

Then for each fixed $1\leq q<\frac{n}{n-1}$, by the compactness of usual Sobolev embedding on the closed unit ball in $\mathbb{R}^{n}$, we can choose a subsequence of $\{u_{m}\}^{\infty}_{i=1}$, which is still denoted by $\{u_{m}\}$, such that $\{\pi^{*}(\rho_{1})\cdot\phi_{1}\cdot u_{m}\}^{\infty}_{m=1}$ is a Cauchy sequence in $L^{q}(M)$. And do this for $i=2, \cdots, i_{0}$, and then $j=1, \cdots, j_{0}$, and the subsequences from each step. Finally, we can obtain a subsequence of the original sequence $\{u_{m}\}$, which is still denoted by $\{u_{m}\}$, such that all $\{\pi^{*}(\rho_{i})\cdot\phi_{1}\cdot u_{m}\}$ for $1\leq i\leq i_{0}$ and all $\{\psi_{j}\cdot\phi_{2}\cdot u_{m}\}$ for $1\leq j\leq j_{0}$ are Cauchy sequences in $L^{q}(M)$. Therefore, $\{u_{m}\}$ is a Cauchy sequence in $L^{q}(M)$, since
\begin{equation*}
\begin{aligned}
\|u_{m}-u_{m^{\prime}}\|_{L^{q}(M)}
&\leq\sum^{i_{0}}_{i=1}\|\pi^{*}(\rho_{i})\cdot\phi_{1}\cdot u_{m}-\pi^{*}(\rho_{i})\cdot\phi_{1}\cdot u_{m^{\prime}}\|_{L^{q}(M)}\\
&\ \ \ \ +\sum^{j_{0}}_{j=1}\|\psi_{j}\cdot\phi_{2}\cdot u_{m}-\psi_{j}\cdot\phi_{2}\cdot u_{m^{\prime}}\|_{L^{q}(M)}.
\end{aligned}
\end{equation*}
This completes the proof.
\end{proof}
\end{section}


\begin{section}{Finite lower bound of $W$-functional}

\noindent In this section, we show that on a manifold with a single conical singularity $(M^{n}, g, x)$ the $W$-functional has a finite lower bound over all functions in $H^{1}(M)$. By the work in \cite{DW17} about the $\lambda$-functional on these manifolds, the key here is to obtain a bound for the term $\int_{M}u^{2}\log ud\vol_{g}$ in the definition of the $W$-functional.

By using the $L^{2}$ Sobolev inequality on compact manifolds with isolated conical singularities obtained in Proposition $\ref{SobolevInequality1}$, in particular, $k=1, p=2$ case, it is well-known that we can derive the following Logarithmic Sobolev inequality (see, e.g. Lemma 5.8 in \cite{CLN06}).

\begin{lem}
Let $(M^{n}, g, x)$ be a compact Riemannian manifold with a single conical singularity at $x$. For any $a>0$, there exists a constant $C(a, g)$ such that if $u\in W^{1, 2}(M)$ with $u>0$ and $\|u\|_{L^{2}(M)}=1$, then
\begin{equation}
\int_{M}u^{2}\ln ud\vol_{g}\leq a\int_{M}|\nabla u|d\vol_{g}+C(a, g).
\end{equation}
\end{lem}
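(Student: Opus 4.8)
The plan is to obtain this inequality as the standard consequence of the $L^{2}$-Sobolev inequality of Proposition \ref{SobolevInequality1} (the case $k=1$, $p=2$), following the classical derivation in the smooth compact case (cf. Lemma 5.8 in \cite{CLN06}). Since the only genuinely singular ingredient is that Sobolev inequality, and it is already established, the argument is then formally identical to the nonsingular one. One point should be settled first, however: the gradient term on the right-hand side has to be the Dirichlet energy $\int_{M}|\nabla u|^{2}\,d\vol_{g}$, not the first power $\int_{M}|\nabla u|\,d\vol_{g}$ as displayed. This is both the quantity produced by the argument below and the one matching the term $4|\nabla u|^{2}$ in the $W$-functional that the lemma is designed to control.

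First I would set $q=\frac{2n}{n-2}$, so that Proposition \ref{SobolevInequality1} gives $\|u\|_{L^{q}(M)}\leq C(M,g)\,\|u\|_{W^{1,2}(M)}$, and observe that $q>2$ for $n\geq 3$. Because $\|u\|_{L^{2}(M)}=1$ and $u>0$, the measure $u^{2}\,d\vol_{g}$ is a probability measure, so applying Jensen's inequality (concavity of $\ln$) to the variable $u^{q-2}$ against this measure gives
\[
\int_{M}u^{2}\ln u\,d\vol_{g}\;\leq\;\frac{q}{q-2}\,\ln\|u\|_{L^{q}(M)}.
\]
Inserting the Sobolev inequality and using $\|u\|_{L^{2}(M)}=1$ then yields
\[
\frac{q}{q-2}\,\ln\|u\|_{L^{q}(M)}\;\leq\;\frac{q}{2(q-2)}\,\ln\!\Big(1+\int_{M}|\nabla u|^{2}\,d\vol_{g}\Big)+C_{0},
\]
with $C_{0}=C_{0}(M,g,n)$.

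Finally I would absorb the logarithm by the elementary bound $\ln(1+t)\leq b\,t+C(b)$, valid for all $t\geq 0$ and all $b>0$; taking $b=\frac{2(q-2)}{q}\,a$ turns the last line into $\int_{M}u^{2}\ln u\,d\vol_{g}\leq a\int_{M}|\nabla u|^{2}\,d\vol_{g}+C(a,g)$, the asserted estimate. I do not expect a genuine obstacle, since the singular geometry enters only through Proposition \ref{SobolevInequality1} and the rest is the classical Jensen-plus-Sobolev argument. The one point that does require care is exactly the gradient term flagged above: the first-power version as literally written is false for $n\geq 3$, because concentrating an $L^{2}$-normalized bump at scale $\lambda^{-1}$ drives $\int_{M}u^{2}\ln u\,d\vol_{g}$ to $+\infty$ like $\tfrac{n}{2}\ln\lambda$ while $\int_{M}|\nabla u|\,d\vol_{g}$ scales like $\lambda^{1-n/2}\to 0$; it is the quadratic gradient energy, scaling like $\lambda^{2}$, that grows fast enough to dominate, confirming that the Dirichlet energy is the correct right-hand side.
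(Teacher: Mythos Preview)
Your proposal is correct and is precisely the approach the paper intends: the paper does not spell out a proof but simply invokes the $L^{2}$ Sobolev inequality from Proposition~\ref{SobolevInequality1} together with the standard Jensen-plus-Sobolev derivation of Lemma~5.8 in \cite{CLN06}, which is exactly what you carry out. Your identification of the typo is also right: the right-hand side must be $a\int_{M}|\nabla u|^{2}\,d\vol_{g}$, as confirmed by the way the lemma is applied in \eqref{W-functionalLowerBound}, and your scaling counterexample shows the first-power version is genuinely false for $n\geq 3$.
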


Then for any $a>0$, and $u\in H^{1}(M)\equiv W^{1, 2}_{1-\frac{n}{2}}(M)\subset W^{1, 2}(M)$ with $u>0$ and $\left\|\frac{1}{(4\pi\tau)^{\frac{n}{4}}}u\right\|_{L^{2}(M)}=1$, we have
\begin{equation}\label{W-functionalLowerBound}
\begin{aligned}
W(g, u, \tau)
&=\frac{1}{(4\pi\tau)^{\frac{n}{2}}}\int_{M}[\tau(R_{g}u^{2}+4|\nabla u|^{2})-2u^{2}\ln u-nu^{2}]d\vol_{g}\\
&\geq \frac{1}{(4\pi\tau)^{\frac{n}{2}}}\int_{M}\tau(R_{g}u^{2}+4|\nabla u|^{2})d\vol_{g}-a\frac{1}{(4\pi\tau)^{\frac{n}{2}}}\int_{M}|\nabla u|^{2}d\vol_{g}\\
&\ \ \ \ -\frac{n}{2}\ln(4\pi\tau)-C(a, g)-n\\
&=\frac{\tau}{(4\pi\tau)^{\frac{n}{2}}}\int_{M}(R_{g}u^{2}+(4-\frac{a}{\tau})|\nabla u|^{2})d\vol_{g}-\frac{n}{2}\ln(4\pi\tau)-C(a, g)-n.
\end{aligned}
\end{equation}

Moreover, for each fixed $\tau>0$, by Remark 1.3 in \cite{DW17}, we can choose a sufficiently small $a>0$ such that
\begin{equation*}
\inf\left\{\int_{M}(R_{g}u^{2}+(4-\frac{a}{\tau})|\nabla u|^{2})d\vol_{g}\mid u\in H^{1}(M), u>0, \left\|\frac{1}{(4\pi\tau)^{\frac{n}{4}}}u\right\|_{L^{2}(M)}=1\right\}>-\infty,
\end{equation*}
if $R_{h_{0}}>(n-2)$ on the cross section of at the conical singularity.

Thus, we have
\begin{thm}\label{ExistenceOfMinimizer}
Let $(M^{n}, g, x)$ be a compact Riemannian manifold with a single conical singularity at $x$. If the scalar curvature of the cross section at the conical singularity $R_{h_{0}}>(n-2)$ on $N$, then for each fixed $\tau>0$,
\begin{equation}
\inf\left\{W(g, u, \tau)\mid u\in H^{1}(M), u>0, \left\|\frac{1}{(4\pi\tau)^{\frac{n}{4}}}u\right\|_{L^{2}(M)}=1\right\}>-\infty
\end{equation}

Moreover, there exists $u_{0}\in C^{\infty}(M\setminus\{x\})$ that realizes the infimum.
\end{thm}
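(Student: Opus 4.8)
The finite lower bound is already in hand, so the remaining content is the existence and regularity of the minimizer. The plan is to run the direct method of the calculus of variations, following the strategy of the smooth compact case in \cite{AH10}, but replacing the Rellich compactness used there by the weighted compact embedding of Proposition~\ref{CompactWeightedSobolevEmbedding}. Let $\mu$ denote the infimum in \eqref{FiniteInfimum} and choose a minimizing sequence $u_m\in H^1(M)$ with $u_m>0$, $\|(4\pi\tau)^{-n/4}u_m\|_{L^2(M)}=1$ and $W(g,u_m,\tau)\to\mu$; replacing $u_m$ by $|u_m|$ changes neither $W$ nor the constraint, so we may assume $u_m\ge0$. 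Feeding the upper bound $W(g,u_m,\tau)\le\mu+1$ into \eqref{W-functionalLowerBound} (with a fixed small $a>0$) bounds $\int_M\big(R_g u_m^2+(4-\tfrac{a}{\tau})|\nabla u_m|^2\big)\,d\vol_g$ from above; combined with the coercive form of the $\mathcal{F}$-type estimate of \cite{DW17} --- which under $R_{h_0}>n-2$ can be taken, via the Hardy inequality on the cone, as $\int_M\big(R_g u^2+(4-\tfrac{a}{\tau})|\nabla u|^2\big)\,d\vol_g\ge\varepsilon\|u\|_{H^1(M)}^2-C\|u\|_{L^2(M)}^2$ for some $\varepsilon>0$ --- and the fact that $\|u_m\|_{L^2(M)}^2=(4\pi\tau)^{n/2}$ is fixed, this yields a uniform bound on $\|u_m\|_{H^1(M)}$. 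Since $H^1(M)=W^{1,2}_{1-n/2}(M)=W^{1,2}(M)$ with equivalent norms, a subsequence converges weakly in $W^{1,2}(M)$ to some $u_0\ge0$, and by Proposition~\ref{SobolevInequality1} it is also bounded in $L^{2n/(n-2)}(M)$.

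The heart of the matter is passing to the limit in $W$, and the key device is to work with the squares $u_m^2$ rather than the $u_m$ themselves. One checks that $\{u_m^2\}$ is bounded in $W^{1,1}_{1-n}(M)$: indeed $\int_M|\nabla(u_m^2)|\le2\|u_m\|_{L^2}\|\nabla u_m\|_{L^2}$, and, using $\chi\ge1$, $\int_M\chi u_m^2\le\int_M\chi^2 u_m^2\le\|u_m\|_{H^1}^2$. Proposition~\ref{CompactWeightedSobolevEmbedding} then gives, along a further subsequence, $u_m^2\to w$ strongly in $L^1(M)$ and pointwise a.e.; because the uniform $L^{2n/(n-2)}$ bound makes $\{u_m\}$ uniformly integrable in $L^2$, the a.e. limit $\sqrt w$ coincides with the weak limit $u_0$ and moreover $u_m\to u_0$ strongly in $L^2(M)$ and a.e. In particular the normalization passes to the limit, so $u_0\not\equiv0$. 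For the logarithmic term I would use the elementary estimate $|t^2\ln t|\le C_\delta(t^{2-\delta}+t^{2+\delta})$ with $\delta>0$ chosen so that $2+\delta<\tfrac{2n}{n-2}$ (possible as $n\ge3$); the uniform $L^{2n/(n-2)}$ bound then makes $\{u_m^2\ln u_m\}$ uniformly integrable, and Vitali's theorem gives $\int_M u_m^2\ln u_m\to\int_M u_0^2\ln u_0$. For the quadratic part, the coercivity above shows that $Q(u):=\int_M\big(\tau R_g u^2+4\tau|\nabla u|^2\big)\,d\vol_g$ satisfies $Q(u)+C'\|u\|_{L^2(M)}^2\ge0$ while $|Q(u)|\le C\|u\|_{H^1(M)}^2$, so $u\mapsto Q(u)+C'\|u\|_{L^2(M)}^2$ is a bounded nonnegative quadratic form on $H^1(M)$, hence weakly lower semicontinuous; since $\|u_m\|_{L^2}^2$ equals the same constant as $\|u_0\|_{L^2}^2$, this gives $Q(u_0)\le\liminf_m Q(u_m)$. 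Combining these three facts, $W(g,u_0,\tau)\le\liminf_m W(g,u_m,\tau)=\mu$, so $u_0$ realizes the infimum.

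For the regularity, which is a local matter on $M\setminus\{x\}$ where $R_g$ is smooth, the minimizer $u_0$ satisfies there, weakly, the Euler--Lagrange equation $-4\tau\Delta u_0+\tau R_g u_0-2u_0\ln u_0=(n+1+\nu)u_0$ with a Lagrange multiplier $\nu$; since $t\mapsto t\ln t$ is bounded on bounded sets, the standard interior elliptic bootstrap (an $L^p$-iteration to get $u_0\in L^\infty_{\rm loc}$, then Schauder theory) shows $u_0\in C^{1,\alpha}_{\rm loc}(M\setminus\{x\})$ and then $u_0\in C^\infty$ on the open set $\{u_0>0\}$. Strict positivity $u_0>0$ on all of $M\setminus\{x\}$ follows from the strong maximum principle exactly as in the smooth compact case treated in \cite{AH10}, and hence $u_0\in C^\infty(M\setminus\{x\})$ realizes the infimum in \eqref{FiniteInfimum}.

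I expect the main obstacle to be this passage to the limit in the nonlinear term $\int_M u^2\ln u$. The only compactness available near the exponent $p=1$ is Proposition~\ref{CompactWeightedSobolevEmbedding}, whose target integrability is strictly below $2$, so one cannot obtain strong $L^2$-compactness of $\{u_m\}$ directly; the point is to apply that compact embedding to the squares $u_m^2$ and then upgrade the convergence and control the $\ln$-term using the uniform higher integrability coming from the merely continuous Sobolev embedding of Proposition~\ref{SobolevInequality1}. A secondary difficulty, also absent in the smooth compact case, is that the scalar curvature term in $W$ is unbounded near the singularity; it is handled by absorbing its (possibly unbounded) negative part into the gradient term via the Hardy inequality underlying the coercivity estimate of \cite{DW17}.
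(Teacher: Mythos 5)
Your proposal is correct and follows the same overall skeleton as the paper's proof (direct method, $H^{1}$-bound from \eqref{W-functionalLowerBound} together with the coercivity estimate \eqref{WeightedSobolevNormEquivalence} of \cite{DW17}, weak lower semicontinuity of the quadratic part via a nonnegative bounded bilinear form, local elliptic regularity as in \cite{AH10}), but it deploys the compactness differently at the key step. The paper takes strong $L^{2}$-convergence of the minimizing sequence directly from the Rellich-type theorem of \cite{DW17} (Theorem 3.1 there), and then handles the logarithmic term by showing that the sequence $u_{i}^{2}\ln u_{i}$ itself is bounded in $W^{1,1}_{1-n}(M)$ --- using $\nabla(u^{2}\ln u)=(2u\ln u+u)\nabla u$ and $|u\ln u|\leq a+bu^{1+\gamma}$ --- so that Proposition \ref{CompactWeightedSobolevEmbedding} applied to $\{u_{i}^{2}\ln u_{i}\}$ gives convergence of $\int_{M}u_{i}^{2}\ln u_{i}\,d\vol_{g}$ directly. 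You instead apply Proposition \ref{CompactWeightedSobolevEmbedding} to the squares $u_{m}^{2}$, recover strong $L^{2}$-convergence and a.e.\ convergence of $u_{m}$ from that plus the uniform integrability supplied by the $L^{2n/(n-2)}$-bound of Proposition \ref{SobolevInequality1}, and then dispatch the logarithmic term by Vitali's theorem using $|t^{2}\ln t|\leq C_{\delta}(t^{2-\delta}+t^{2+\delta})$. Both routes are valid; yours is somewhat more self-contained (it does not invoke the compact embedding $H^{1}(M)\hookrightarrow L^{2}(M)$ from \cite{DW17}) and avoids estimating $\nabla(u^{2}\ln u)$, while the paper's version gets by with a single application of the weighted compact embedding and no appeal to uniform integrability. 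Two cosmetic points: the normalization already forces $u_{m}>0$, so the reduction to $|u_{m}|$ is unnecessary; and your Euler--Lagrange equation differs from the paper's in the coefficient of the $u\ln u$ term (the correct first variation of $-2u^{2}\ln u$ contributes $-4u\ln u-2u$), though this is immaterial for the local bootstrap and maximum-principle argument.
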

\begin{proof}
We have seen that the infimum is finite. Now we show the existence of the minimizer $u_{0}$ by using direct methods in the calculus of variations by the following two steps.

{\em Step 1}. Let
\begin{equation}
m=\inf\left\{W(g, u, \tau)\mid u\in H^{1}(M), u>0, \left\|\frac{1}{(4\pi\tau)^{\frac{n}{4}}}u\right\|_{L^{2}(M)}=1\right\}>-\infty,
\end{equation}
and $\{u_{i}\}^{\infty}_{i=1}$ be a minimizing sequence, i.e.
\begin{equation}
u_{i}>0, \ \  \left\|\frac{1}{(4\pi\tau)^{\frac{n}{4}}}u_{i}\right\|_{L^{2}(M)}=1, \ \ \textit{for all} \ \ i,
\end{equation}
and
\begin{equation}
\lim_{i\rightarrow\infty}W(g, u_{i}, \tau)=m.
\end{equation}

By the work in \cite{DW17}, there exists constants $A=A(g)$, $C_{1}=C_{1}(g, A)$, and $C_{2}=C_{2}(g, A)$, such that for any $u\in H^{1}(M)$
\begin{equation}\label{WeightedSobolevNormEquivalence}
C_{1}\|u\|_{H^{1}(M)}\leq\int_{M}((R_{g}+A)u^{2}+4|\nabla u|^{2})d\vol_{g}\leq C_{2}\|u\|_{H^{1}(M)}.
\end{equation}
Here, the left inequality follows from Theorem 5.1 in \cite{DW17}, and the right inequality follows from the definition of the weighted Sobolev norm $\|\cdot\|_{H^{1}(M)}$ and the fact that $M$ and the cross section $N$ are compact.

Then by $(\ref{W-functionalLowerBound})$ and $(\ref{WeightedSobolevNormEquivalence})$, there exists a constant $B$ such that
\begin{equation}
\|u_{i}\|_{H^{1}(M)}\leq B,
\end{equation}
for all $i$. Thus, by Theorem 3.1 in \cite{DW17}, there exists a subsequence of the minimizing sequence $\{u_{i}\}$, which is still denoted by $\{u_{i}\}$, weakly converges to $u_{0}$ in $H^{1}(M)$, and strongly converges to $u_{0}$ in $L^{2}(M)$ for some $u_{0}\in H^{1}(M)$. And consequently, $u_{0}\geq 0$ a.e., and $\left\|\frac{1}{(4\pi\tau)^{\frac{n}{4}}}u_{0}\right\|_{L^{2}(M)}=1$.

{\em Step 2}. Now we will show that $W(g, u_{0}, \tau)\leq\lim\limits_{i\rightarrow\infty}W(g, u_{i}, \tau)=m$, and then $u_{0}$ is a minimizer.

For any $u, v\in H^{1}(M)$, let
\begin{equation}
(u, v)_{A}\equiv\int_{M}((R_{g}+A)u\cdot v+4\langle\nabla u, \nabla v\rangle)d\vol_{g}.
\end{equation}
Then by $(\ref{WeightedSobolevNormEquivalence})$, $(u, v)_{A}$ is an inner product on $H^{1}(M)$, and it induces a norm $\|\cdot\|_{A}$ that is equivalent to $H^{1}(M)$ norm. And we have
\begin{align*}
\|u_{i}\|^{2}_{A}
&=\|u_{0}\|^{2}_{A}+2(u_{0}, u_{i}-u_{0})_{A}+\|u_{i}-u_{0}\|^{2}_{A}\\
&\geq \|u_{0}\|^{2}_{A}+2(u_{0}, u_{i}-u_{0})_{A}.
\end{align*}
Becasue $u_{0}\in H^{1}(M)$ and $u_{i}$ weakly converges to $u_{0}$ in $H^{1}(M)$, we have
\begin{equation}
\lim_{i\rightarrow\infty}(u_{0}, u_{i}-u_{0})_{A}=0.
\end{equation}
Thus,
\begin{equation}
\lim_{i\rightarrow\infty}\|u_{i}\|^{2}_{A}\geq\|u_{0}\|^{2}_{A}.
\end{equation}
Then by $\lim_{i\rightarrow\infty}\|u_{i}\|_{L^{2}(M)}=\|u_{0}\|_{L^{2}(M)}$, we obtain
\begin{equation*}
\lim_{i\rightarrow\infty}\frac{\tau}{(4\pi\tau)^{\frac{n}{2}}}\int_{M}[\tau(R_{g}u^{2}_{i}+4|\nabla u_{i}|^{2})-nu^{2}_{i}]d\vol_{g}\geq\frac{\tau}{(4\pi\tau)^{\frac{n}{2}}}\int_{M}[\tau(R_{g}u^{2}_{0}+4|\nabla u_{0}|^{2})-nu^{2}_{0}]d\vol_{g}.
\end{equation*}

So it suffices to show that $\int_{M}u^{2}_{i}\ln u_{i}d\vol_{g}\rightarrow\int_{M}u^{2}_{0}\ln u_{0}d\vol_{g}$ as $i\rightarrow\infty$ for a subequence of the minimizing sequence $\{u_{i}\}$. As in the proof of Proposition 11.10 in \cite{AH10}, $\nabla(u^{2}\ln u)=(2u\ln u+u)\nabla u$, and for any $\gamma>0$ there exists constants $a, b>0$ such that $|u\ln u|\leq a+bu^{1+\gamma}$. Then for sufficiently small $\gamma>0$, we have
\begin{align*}
&\ \ \ \ \int_{M}|\nabla(u^{2}_{i}\ln u_{i})|d\vol_{g}\\
&\leq\int_{M}|u_{i}+2u_{i}\ln u_{i}|\cdot|\nabla u_{i}|d\vol_{g}\\
&\leq \left(\int_{M}|2a+u_{i}|^{2}d\vol_{g}\right)^{\frac{1}{2}}\left(\int_{M}|\nabla u_{i}|^{2}\right)^{\frac{1}{2}}+2b\left(\int_{M}|u_{i}|^{2+2\gamma}\right)^{\frac{1}{2}}\left(\int_{M}|\nabla u_{i}|^{2}d\vol_{g}\right)^{\frac{1}{2}}\\
&\leq C_{3},
\end{align*}
for a constant $C_{3}$ independent of $i$. Here, we use the Sobolev embedding $H^{1}(M)\subset W^{1, 2}(M)\subset L^{q}(M)$ for $1\leq q\leq\frac{2n}{n-2}$.

And
\begin{align*}
\int_{M}\chi|u^{2}_{i}\ln u_{i}|d\vol_{g}
&\leq\int_{M}\chi|u_{i}|\cdot|a+bu^{1+\gamma}_{i}|d\vol_{g}\\
&\leq\left(\int_{M}\chi^{2}|u_{i}|^{2}d\vol_{g}\right)^{\frac{1}{2}}\left(\int_{M}|a+bu_{i}|^{2}d\vol_{g}\right)^{\frac{1}{2}}\\
&\leq\|u_{i}\|_{H^{1}(M)}\left(a(\Vol_{g}(M))^{\frac{1}{2}}+b\left(\int_{M}|u_{i}|^{2+2\gamma}d\vol_{g}\right)^{\frac{1}{2}}\right)\\
&\leq C_{4},
\end{align*}
for a constant $C_{4}$ independent of $i$.

Thus,
\begin{equation}
\|u^{2}_{i}\ln u_{i}\|_{W^{1, 1}_{1-n}}\leq C_{3}+C_{4},
\end{equation}
for all $i$. Then by Proposition $\ref{CompactWeightedSobolevEmbedding}$, passsing to a subsequence we have $\lim\limits_{i\rightarrow\infty}\int_{M}u^{2}_{i}\ln u_{i}d\vol_{g}=\int_{M}u^{2}_{0}\ln u_{0}d\vol_{g}$.

Now we have obtained a minimizer $u_{0}\in H^{1}(M)$, and $u_{0}$ is a weak solution of the elliptic equation
\begin{equation}
-\Delta u+\frac{1}{4}R_{g}u-\frac{2}{\tau}u\ln u-\frac{n}{\tau}u-\frac{m}{\tau}u=0,
\end{equation}
where $m$ is the infimum  of the $W$-functional. The regularity of $u_{0}$ and $u_{0}>0$ can be shown locally. Thus the proof is the same as the compact smooth case, for details, see, e.g. p. 179 in \cite{AH10}.
\end{proof}
\end{section}


\begin{section}{Asymptotic behavior of the minimizer}
\noindent In this section, we obtain an asymptotic order for the minimizer near the singularity by using a weighted elliptic bootstrapping. For this, we need to establish some weighted Sobolev inequalities and weighted elliptic estimates. We first work on a finite asymptotic cone $(C_{\epsilon}(N)=(0,\epsilon)\times N, g=dr^{2}+r^{2}h_{r})$ satisfying the condition $AC_{1}$ defined in $(\ref{AC})$. In the following, we set
\begin{equation}
L\equiv-\Delta+\frac{1}{4}R.
\end{equation}

As in \S 8 in \cite{DW17}, we define weighted uniform $C^{k}_{\delta}$-norms on a finite cone $C_{\epsilon}(N)$ as
\begin{equation}\label{WeightedUniformNorm}
\|u\|_{C^{k}_{\delta}(C_{\epsilon(N)})}=\sup_{C_{\epsilon}(N)}\left(\sum^{k}_{i=0}r^{i-\delta}|\nabla^{i}u|\right),
\end{equation}
for $k\in\mathbb{N}$ and $\delta\in\mathbb{R}$. When $k=0$, we use $C_{\delta}$ to denote $C^{0}_{\delta}$.

Recall the homogeneity property of the weighted norms under scaling along the radial direction of the cone. Let $u(r, \theta)\in C^{\infty}_{0}(C_{\epsilon}(N))$, where $\theta$ is a local coordinate on $N$, and set
\begin{equation}
u_{a}(r, \theta)=u(ar, \theta),
\end{equation}
for a positive constant $a$. And let $C_{r_{1}, r_{2}}=(r_{1}, r_{2})\times N$ be an annulus on the finite metric cone $C_{\epsilon}(N)$, for $0\leq r_{1}<r_{2}\leq\epsilon$. Then by a simple change of variable, we can see
\begin{equation}\label{CNormHomogeneity}
\|u\|_{C^{k}_{\delta}(C_{ar_{1},ar_{2}})}=a^{-\delta}\|u_{a}\|_{C^{k}_{\delta}(C_{r_{1},r_{2}})}.
\end{equation}

Then similar as Theorem 1.2 in \cite{Bar86}, and the same as Lemma 8.1 in \cite{DW17}, by using $(\ref{SobolevHomogeneity})$, $(\ref{CNormHomogeneity})$, and the scaling technique, we obtain the following Sobolev inequality as an extension of Lemma 8.1 in \cite{DW17}.

\begin{lem}
If $\epsilon>0$ is sufficiently small, for any $u\in W^{2, p}_{\delta}(C_{\epsilon}(N))$ with $2>\frac{n}{p}+l$, we have
\begin{equation}
\|u\|_{C^{l}_{\delta}(C_{\epsilon}(N))}\leq C\|u\|_{W^{2, p}_{\delta}(C_{\epsilon}(N))},
\end{equation}
for a constant $C=C(g, n,k,\delta,\epsilon)$.

Moreover,
$$|\nabla^{l}u(r,x)|=o(r^{-l+\delta}) \ \ \text{as} \ \ r\rightarrow0.$$
\end{lem}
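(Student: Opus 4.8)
The plan is to prove the weighted Sobolev inequality by the scaling/dyadic-decomposition technique already used in the proof of Proposition \ref{WeightedSobolevEmbedding}, and then to deduce the decay statement $|\nabla^l u(r,x)| = o(r^{-l+\delta})$ as a corollary of the finiteness of the weighted norm. First I would fix the dyadic annuli $C_j \equiv C_{(\frac12)^{j+1}\epsilon,\, (\frac12)^{j}\epsilon}$ covering $C_\epsilon(N)$, and for each $j$ set $u_{a_j}(r,\theta) = u(a_j r,\theta)$ with $a_j = (\frac12)^j$, so that $u_{a_j}$ is defined on the fixed annulus $C_{\frac12\epsilon,\epsilon}$. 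On this fixed compact manifold with boundary $\overline{C_{\frac12\epsilon,\epsilon}}$, the usual Sobolev embedding theorem gives $\|v\|_{C^l(\overline{C_{\frac12\epsilon,\epsilon}})} \le C\|v\|_{W^{2,p}(\overline{C_{\frac12\epsilon,\epsilon}})}$ whenever $2 - \frac{n}{p} > l$, with a constant $C$ independent of $j$ (since the annulus and its metric are fixed — here one uses the $AC_1$ condition to control $h_r$ uniformly). Since on this fixed annulus the weighted norms $\|\cdot\|_{C^l_\delta}$ and $\|\cdot\|_{W^{2,p}_\delta}$ are equivalent to their unweighted counterparts (the weight $r$ is bounded above and below there), this gives $\|u_{a_j}\|_{C^l_\delta(C_{\frac12\epsilon,\epsilon})} \le C\|u_{a_j}\|_{W^{2,p}_\delta(C_{\frac12\epsilon,\epsilon})}$.

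Next I would apply the two homogeneity identities: equation \eqref{CNormHomogeneity} gives $\|u\|_{C^l_\delta(C_j)} = a_j^{-\delta}\|u_{a_j}\|_{C^l_\delta(C_{\frac12\epsilon,\epsilon})}$, and equation \eqref{SobolevHomogeneity} gives $\|u\|_{W^{2,p}_\delta(C_j)} = a_j^{-\delta}\|u_{a_j}\|_{W^{2,p}_\delta(C_{\frac12\epsilon,\epsilon})}$. Combining these with the scale-invariant estimate from the previous step, the factors $a_j^{-\delta}$ cancel, yielding
\begin{equation*}
\|u\|_{C^l_\delta(C_j)} \le C \|u\|_{W^{2,p}_\delta(C_j)} \le C\|u\|_{W^{2,p}_\delta(C_\epsilon(N))}
\end{equation*}
with $C$ independent of $j$. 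Since $\|u\|_{C^l_\delta(C_\epsilon(N))} = \sup_j \|u\|_{C^l_\delta(C_j)}$ (the $C^l_\delta$-norm is a supremum, so the sup over $j$ of the sup over each annulus recovers the sup over the whole cone, up to the overlap of closures which only helps), taking the supremum over $j$ gives the desired inequality $\|u\|_{C^l_\delta(C_\epsilon(N))} \le C\|u\|_{W^{2,p}_\delta(C_\epsilon(N))}$. The density of $C^\infty_0(C_\epsilon(N))$ in $W^{2,p}_\delta(C_\epsilon(N))$ then extends it from smooth compactly supported functions to all of $W^{2,p}_\delta$.

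For the decay statement, I would argue as follows. Given $u \in W^{2,p}_\delta(C_\epsilon(N))$, for each small $\rho > 0$ consider the tail $u$ restricted to $C_\rho(N) = (0,\rho)\times N$. By dominated convergence (or just the definition of the weighted norm as an integral), $\|u\|_{W^{2,p}_\delta(C_\rho(N))} \to 0$ as $\rho \to 0$. Applying the already-proved inequality on $C_\rho(N)$ in place of $C_\epsilon(N)$ — noting the constant can be taken uniform for $\rho \le \epsilon$ by the same scaling argument — gives $\sup_{C_\rho(N)} \sum_{i=0}^l r^{i-\delta}|\nabla^i u| \le C\|u\|_{W^{2,p}_\delta(C_\rho(N))} \to 0$. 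In particular $r^{l-\delta}|\nabla^l u(r,x)| \le C\|u\|_{W^{2,p}_\delta(C_r(N))} \to 0$ as $r\to 0$, which is precisely $|\nabla^l u(r,x)| = o(r^{-l+\delta})$.

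The main obstacle I anticipate is the uniformity of constants: one must check that the Sobolev constant on the fixed reference annulus $\overline{C_{\frac12\epsilon,\epsilon}}$ does not degrade as $j\to\infty$, which requires the metrics $g = dr^2 + r^2 h_r$ restricted to these annuli (equivalently, the rescaled metrics $a_j^{-2} g|_{C_j}$ pulled back to $C_{\frac12\epsilon,\epsilon}$) to be uniformly controlled in $C^1$ — this is exactly where the hypothesis $AC_1$ enters, guaranteeing $h_r \to h_0$ with bounded first derivatives, so the rescaled metrics converge to $dr^2 + r^2 h_0$ and form a precompact family. A secondary, more routine point is verifying the equivalence of weighted and unweighted norms on the fixed annulus and the behaviour of the weight function $\chi$ versus $1/r$ there (they agree for $r < \epsilon_0/4$), which requires only that $\epsilon$ be chosen small enough, as already assumed in the statement.
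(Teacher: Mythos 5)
Your proposal is correct and follows essentially the same route the paper intends: the dyadic-annulus rescaling argument combining the homogeneity identities \eqref{SobolevHomogeneity} and \eqref{CNormHomogeneity} with the standard Sobolev embedding on the fixed reference annulus (the paper only sketches this by citing Theorem 1.2 of \cite{Bar86} and Lemma 8.1 of \cite{DW17}), with $AC_1$ ensuring uniform constants for the rescaled metrics. Your derivation of the decay statement from the vanishing of the tail norm $\|u\|_{W^{2,p}_{\delta}(C_{\rho}(N))}\rightarrow 0$ is also the standard argument and matches the cited sources.
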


\begin{lem}
If $\epsilon$ is sufficiently small, $u\in W^{0, p}_{\delta}(C_{\epsilon}(N))$, and $Lu\in W^{0, p}_{\delta-2}(C_{\epsilon}(N))$, then
\begin{equation*}
\|u\|_{W^{2, p}_{\delta}(C_{\epsilon}(N))}\leq C\left(\|Lu\|_{W^{0, p}_{\delta-2}(C_{\epsilon}(N))}+\|u\|_{W^{0, p}_{\delta}(C_{\epsilon}(N))}\right),
\end{equation*}
for a constant $C=C(g, n,\delta,\epsilon)$.
\end{lem}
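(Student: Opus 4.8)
The plan is to run the scaling argument of \cite{Bar86}, in the form already used in Lemma 8.1 of \cite{DW17}, reducing the weighted a priori estimate on the cone to a single \emph{uniform} interior $L^p$ estimate on one fixed annulus. First note that since $Lu\in W^{0,p}_{\delta-2}(C_\epsilon(N))$ lies in $L^p_{\mathrm{loc}}$ away from the tip, where the coefficients of $L=-\Delta+\tfrac14 R$ are smooth, interior elliptic regularity already gives $u\in W^{2,p}_{\mathrm{loc}}(C_\epsilon(N))$; the point is to control its weighted norm. I would decompose $C_\epsilon(N)$ into the dyadic annuli $A_j=C_{2^{-(j+1)}\epsilon,\,2^{-j}\epsilon}$, $j\ge0$, together with slightly enlarged collars $A_j'=C_{2^{-(j+2)}\epsilon,\,2^{-(j-1)}\epsilon}$, taken inside $M\setminus\{x\}$ so that for the finitely many smallest $j$ they may protrude past $r=\epsilon$, harmlessly. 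The $A_j$ tile $C_\epsilon(N)$ up to a null set, and the $A_j'$ have overlap multiplicity at most $3$.

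On $A_j$, set $a_j=2^{-j}$ and $u^{(j)}(r,\theta):=u(a_jr,\theta)$, which lives on the fixed annulus $C_{\epsilon/2,\epsilon}$ and its collar $C_{\epsilon/4,2\epsilon}$. Since the dilation $\phi_a(r,\theta)=(ar,\theta)$ satisfies $\phi_a^*(dr^2+r^2h_r)=a^2(dr^2+r^2h_{ar})$, the rescaled metric is $\tilde g_j:=a_j^{-2}\phi_{a_j}^*g=dr^2+r^2h_{a_jr}$, and, using $\Delta_{\lambda g'}=\lambda^{-1}\Delta_{g'}$ and $R_{\lambda g'}=\lambda^{-1}R_{g'}$ for constant $\lambda$ together with naturality under $\phi_{a_j}$, one checks $L_{\tilde g_j}u^{(j)}=a_j^2(Lu)^{(j)}$. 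Under the condition $AC_1$ of $(\ref{AC})$ and for $\epsilon$ small, $\{h_{a_jr}\}_j$ is bounded in $C^1(C_{\epsilon/4,2\epsilon})$ and converges in $C^0$ to $h_0$, while $R_{\tilde g_j}(r,\theta)=a_j^2R_g(a_jr,\theta)$ stays bounded in $L^\infty(C_{\epsilon/4,2\epsilon})$ because $r^2R_g=O(1)$ near the tip; hence $\{L_{\tilde g_j}\}$ is a uniformly elliptic family with uniformly continuous leading coefficients and uniformly bounded zeroth order term, and the classical interior $L^p$ estimate holds with a constant $C$ independent of $j$:
$$\|u^{(j)}\|_{W^{2,p}(C_{\epsilon/2,\epsilon})}\le C\Big(\|L_{\tilde g_j}u^{(j)}\|_{L^p(C_{\epsilon/4,2\epsilon})}+\|u^{(j)}\|_{L^p(C_{\epsilon/4,2\epsilon})}\Big).$$
On the fixed annulus the weighted norms $W^{k,p}_\delta$ are equivalent to the unweighted ones, so all norms may be replaced by the corresponding weighted ones, with weight $\delta$ on the left and on the last term and weight $\delta-2$ on the middle term (this being the weight that makes everything scale consistently), and the middle term written as $\|a_j^2(Lu)^{(j)}\|_{W^{0,p}_{\delta-2}(C_{\epsilon/4,2\epsilon})}$.

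Un-rescaling via the homogeneity $(\ref{SobolevHomogeneity})$, $\|v_a\|_{W^{k,p}_\delta(C_{r_1,r_2})}=a^{\delta}\|v\|_{W^{k,p}_\delta(C_{ar_1,ar_2})}$, the left side becomes $a_j^{\delta}\|u\|_{W^{2,p}_\delta(A_j)}$; in the middle term the prefactor $a_j^2$ combines with the $a_j^{\delta-2}$ coming from the $W^{0,p}_{\delta-2}$-homogeneity to give $a_j^{\delta}\|Lu\|_{W^{0,p}_{\delta-2}(A_j')}$; and the last term becomes $a_j^{\delta}\|u\|_{W^{0,p}_\delta(A_j')}$. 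All the factors $a_j^{\delta}$ cancel, leaving, for every $j$,
$$\|u\|_{W^{2,p}_\delta(A_j)}\le C\big(\|Lu\|_{W^{0,p}_{\delta-2}(A_j')}+\|u\|_{W^{0,p}_\delta(A_j')}\big)$$
with $C$ independent of $j$. Raising to the $p$-th power and summing over $j$, using that these weighted $L^p$-quantities are $p$-additive over the essentially disjoint $A_j$ and $p$-subadditive with a factor $3$ over the boundedly overlapping $A_j'$, and adding the contribution of the finitely many outermost pieces, which are bounded away from the tip and handled by ordinary interior $W^{2,p}$ estimates on $M\setminus\{x\}$, yields the asserted inequality.

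The only step with genuine content is the uniformity in $j$ of the interior $L^p$ estimate for the operators $L_{\tilde g_j}$, equivalently the statement that the rescaled metrics $\tilde g_j$ and potentials $R_{\tilde g_j}$ remain in a fixed compact family of admissible coefficients. This is exactly where the asymptotic condition $AC_1$, the smallness of $\epsilon$, and the bound $r^2R_g=O(1)$ at the tip are used, and it is the analogue here of Theorem 1.2 in \cite{Bar86} and Lemma 8.1 in \cite{DW17}. Everything else is bookkeeping with $(\ref{SobolevHomogeneity})$ and a finite-overlap covering argument.
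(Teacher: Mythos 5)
Your proof is correct and follows essentially the route the paper intends: the paper states this lemma without a written proof, appealing to the scaling technique of Theorem 1.2 in \cite{Bar86} and Lemma 8.1 of \cite{DW17}, and your dyadic-annulus decomposition, rescaling to a fixed annulus, uniform interior $L^p$ estimate, and unscaling via the homogeneity $(\ref{SobolevHomogeneity})$ is exactly that argument carried out in detail. The one point worth flagging (shared with the paper's own hypotheses) is that the uniform bound on the rescaled potential $a_j^2R_g(a_jr,\theta)$, i.e.\ $r^2R_g=O(1)$, implicitly uses control on two derivatives of $h_r-h_0$ rather than just the first-order condition $AC_1$.
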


Similar as in $(\ref{WeightedUniformNorm})$, we also define weighted uniform $C^{k}_{\delta}$-norms on a compact Riemannian manifold $(M^{n}, g, x)$ with a single conical singularity at $x$ as
\begin{equation}
\|u\|_{C^{k}_{\delta}(M)}=\sup_{M\setminus\{x\}}\left(\sum^{k}_{i=0}\chi^{\delta-i}|\nabla^{i} u|\right),
\end{equation}
for any $u\in C^{k}(M\setminus\{x\})$, where $\chi$ is a weight function as in $(\ref{WeightFunction})$.

Then by using the usual Sobolev inequality and elliptic estimates on the interior part of compact manifolds with conical singularities, we directly obtain
\begin{lem}\label{SobolevInequality}
Let $(M^{n}, g, x)$ be a compact Riemannian manifold with a single conical singularity at $x$ satisfying the condition $AC_{1}$ defined in $(\ref{AC})$. Then for any $u\in W^{2, p}_{\delta}(M)$ with $2>\frac{n}{p}+l$, we have
\begin{equation}
\|u\|_{C^{l}_{\delta}(M)}\leq C\|u\|_{W^{2, p}_{\delta}(M)},
\end{equation}
for a constant $C=C(g, n, k, \delta)$.

Moreover,
$$|\nabla^{l}u(r,x)|=o(r^{-l+\delta}) \ \ \text{as} \ \ r\rightarrow0.$$
\end{lem}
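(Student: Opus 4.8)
The plan is to reduce everything to the two lemmas already proved on a finite cone (the weighted Sobolev inequality $\|u\|_{C^{l}_{\delta}(C_{\epsilon}(N))}\leq C\|u\|_{W^{2,p}_{\delta}(C_{\epsilon}(N))}$ together with the decay statement $|\nabla^{l}u|=o(r^{-l+\delta})$) plus the classical Sobolev embedding and interior elliptic estimates on the smooth compact part of $M$, and then glue the two pieces with a partition of unity. First I would fix a conical neighborhood $U_{x}$ with $(U_{x}\setminus\{x\},g)$ isometric to $((0,\epsilon_0)\times N,dr^2+r^2h_r)$ satisfying $AC_1$, and choose a cutoff $\phi_1\in C^{\infty}(M\setminus\{x\})$ that equals $1$ on $C_{\epsilon}(N)$ for some $\epsilon<\epsilon_0/4$, is supported in $C_{2\epsilon}(N)$, and is radial there, setting $\phi_2=1-\phi_1$ as in the proof of Proposition \ref{CompactWeightedSobolevEmbedding}.

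For the near-singularity piece $\phi_1 u$, the key point is that $\chi=1/r$ on $\operatorname{supp}\phi_1$, so the weighted norms $W^{2,p}_{\delta}(M)$ and $C^{l}_{\delta}(M)$ restricted there coincide (up to the bounded quantities coming from $g=dr^2+r^2h_r$ versus $g_0=dr^2+r^2h_0$, controlled by $AC_1$) with the cone norms $W^{2,p}_{\delta}(C_{2\epsilon}(N))$ and $C^{l}_{\delta}(C_{2\epsilon}(N))$. Since $\phi_1$ is radial, Leibniz gives $|\nabla^{i}(\phi_1 u)|\leq C\sum_{j\le i}r^{-(i-j)}|\nabla^{j}u|$, so $\|\phi_1 u\|_{W^{2,p}_{\delta}(C_{2\epsilon}(N))}\leq C\|u\|_{W^{2,p}_{\delta}(M)}$; applying the cone Lemma to $\phi_1 u$ bounds $\|\phi_1 u\|_{C^{l}_{\delta}(C_{2\epsilon}(N))}$ and yields $|\nabla^{l}(\phi_1 u)|=o(r^{-l+\delta})$, hence $|\nabla^{l}u|=o(r^{-l+\delta})$ on $C_{\epsilon}(N)$, which is exactly the "moreover" assertion. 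For the interior piece $\phi_2 u$, supported away from $x$ in the smooth compact region, the weight function $\chi$ is bounded above and below by positive constants, so $W^{2,p}_{\delta}$ and $C^{l}_{\delta}$ are equivalent to the ordinary $W^{2,p}$ and $C^{l}$ norms there; then the classical Morrey–Sobolev embedding $W^{2,p}\hookrightarrow C^{l}$ (valid precisely when $2>\frac{n}{p}+l$) on the compact manifold-with-boundary $M\setminus C_{\epsilon/2}(N)$, combined with interior elliptic estimates, gives $\|\phi_2 u\|_{C^{l}_{\delta}(M)}\leq C\|u\|_{W^{2,p}_{\delta}(M)}$. Adding the two estimates via $\|u\|_{C^{l}_{\delta}(M)}\leq \|\phi_1 u\|_{C^{l}_{\delta}(M)}+\|\phi_2 u\|_{C^{l}_{\delta}(M)}$ finishes the inequality.

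The main obstacle I expect is purely bookkeeping rather than conceptual: making sure the commutator terms produced when $\phi_1$ (or $\phi_2$) is differentiated are absorbed with the correct powers of $\chi$, so that the cutoff does not worsen the weight index $\delta$ — this is why one insists that $\phi_1$ be radial on the cone, so that each derivative of $\phi_1$ costs exactly one factor of $1/r=\chi$, matching the homogeneity built into the $W^{2,p}_{\delta}$ and $C^{l}_{\delta}$ norms. A secondary point to be careful about is that the ambient metric is only $g=dr^2+r^2h_r$ with $h_r=h_0+o(r^{\alpha})$, not the exact model $g_0$; here the hypothesis $AC_1$ guarantees that $g$ and $g_0$ (and their first derivatives) are uniformly comparable on $C_{2\epsilon}(N)$, so applying the cone Lemma — which is stated for the model metric — to $\phi_1 u$ only changes constants. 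Everything else is a direct transcription of the cone estimates and the standard compact-manifold embedding.
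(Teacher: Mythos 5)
Your proposal is correct and follows exactly the route the paper intends: the paper offers no detailed proof, merely asserting that the lemma follows ``directly'' from the finite-cone version together with the usual Sobolev inequality on the interior part, which is precisely your cutoff-and-glue argument (and your care with the radial cutoff costing one factor of $\chi=1/r$ per derivative is the right bookkeeping). The only superfluous ingredient is the mention of interior elliptic estimates for the $\phi_{2}u$ piece --- the classical Morrey embedding $W^{2,p}\hookrightarrow C^{l}$ on the compact part already suffices for this lemma; elliptic estimates are only needed for the companion Lemma \ref{EllipticEstimate}.
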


\begin{lem}\label{EllipticEstimate}
Let $(M^{n}, g, x)$ be a compact Riemannian manifold with a single conical singularity at $x$ satisfying the condition $AC_{1}$ defined in $(\ref{AC})$. If $u\in W^{0, p}_{\delta}(M)$, and $Lu\in W^{0, p}_{\delta-2}(M)$, then
\begin{equation}
\|u\|_{W^{2, p}_{\delta}(M)}\leq C\left(\|Lu\|_{W^{0, p}_{\delta-2}(M)}+\|u\|_{W^{0, p}_{\delta}(M)}\right),
\end{equation}
for a constant $C=C(g, n, k, \delta)$.
\end{lem}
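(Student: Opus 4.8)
The plan is to prove the weighted elliptic estimate in Lemma~\ref{EllipticEstimate} by combining two ingredients: the interior (classical) $L^p$ elliptic estimate on the smooth part $M\setminus\{x\}$ away from the cone, and the corresponding estimate on a finite cone $C_\epsilon(N)$, which is precisely the content of the preceding lemma. First I would fix a conical neighborhood $U_x\cong C_{\epsilon_0}(N)$ of the singular point and a cutoff $\phi\in C^\infty(M\setminus\{x\})$ with $\phi\equiv 1$ on $C_{\epsilon_0/4}(N)$ and $\supp\phi\subset C_{\epsilon_0/2}(N)$. Writing $u=\phi u+(1-\phi)u$, the term $(1-\phi)u$ is supported in a compact piece of the smooth manifold $M\setminus\{x\}$, where the weight $\chi$ is bounded above and below by positive constants, so $W^{k,p}_\delta$-norms there are equivalent to ordinary $W^{k,p}$-norms; the classical interior elliptic estimate for the uniformly elliptic operator $L=-\Delta+\tfrac14 R$ (with smooth bounded coefficients on that compact region) gives $\|(1-\phi)u\|_{W^{2,p}}\le C(\|L((1-\phi)u)\|_{L^p}+\|(1-\phi)u\|_{L^p})$, and $L((1-\phi)u)=(1-\phi)Lu+[\,L,1-\phi\,]u$, where the commutator is a first-order operator with coefficients supported in $\{0<\phi<1\}$, hence controlled by $\|u\|_{W^{1,p}}$ on that annulus, which by interpolation (or by the previous cone lemma applied on the annulus) is absorbed into the right-hand side.

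Next I would handle $\phi u$, which is supported in the cone and vanishes near $r=\epsilon_0/2$, so it may be regarded as an element of $W^{0,p}_\delta(C_\epsilon(N))$ for $\epsilon=\epsilon_0/2$; moreover $L(\phi u)=\phi\, Lu+[\,L,\phi\,]u\in W^{0,p}_{\delta-2}(C_\epsilon(N))$, using that the commutator $[\,L,\phi\,]$ is first order with coefficients supported away from the vertex (so they contribute powers of $\chi$ that are harmless) and applying the hypothesis $Lu\in W^{0,p}_{\delta-2}(M)$ together with $u\in W^{0,p}_\delta(M)$. Then the cone elliptic estimate (Lemma~5.5 in the excerpt) yields
\begin{equation*}
\|\phi u\|_{W^{2,p}_\delta(C_\epsilon(N))}\le C\left(\|L(\phi u)\|_{W^{0,p}_{\delta-2}(C_\epsilon(N))}+\|\phi u\|_{W^{0,p}_\delta(C_\epsilon(N))}\right),
\end{equation*}
and expanding $L(\phi u)$ reduces the right-hand side to $C(\|Lu\|_{W^{0,p}_{\delta-2}(M)}+\|u\|_{W^{0,p}_\delta(M)})$ plus a first-order term $\|u\|_{W^{1,p}_\delta}$ on the overlap annulus, which again is absorbed. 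Adding the two pieces and using that the $W^{2,p}_\delta(M)$-norm is equivalent to the sum of the cone-part and the smooth-part norms (because $M=C_{\epsilon_0/2}(N)\cup(M\setminus C_{\epsilon_0/4}(N))$ is a finite cover and $\chi$ is comparable to $r^{-1}$ on the cone, bounded elsewhere) completes the estimate.

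The main obstacle, and the point requiring genuine care, is the treatment of the first-order commutator terms $[\,L,\phi\,]u$: one must verify that $\|u\|_{W^{1,p}_\delta}$ on the overlap region can be absorbed into $\varepsilon\|u\|_{W^{2,p}_\delta}+C_\varepsilon\|u\|_{W^{0,p}_\delta}$. On a fixed compact annulus this is the standard interpolation inequality $\|\nabla u\|_{L^p}\le \varepsilon\|\nabla^2 u\|_{L^p}+C_\varepsilon\|u\|_{L^p}$, and since the weight $\chi$ is bounded above and below by positive constants on that annulus the weighted version follows immediately; so this is a technical but routine step rather than a conceptual difficulty. A secondary point to be checked is that the constant $C$ depends only on $g,n,\delta$ (and not, say, on $u$ or on the particular exhaustion), which follows because all the classical estimates invoked have constants depending only on the geometry of the fixed compact pieces and the ellipticity constants of $L$ there, and the cone estimate constant is furnished with the stated dependence by the previous lemma.
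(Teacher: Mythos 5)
Your proposal is correct and follows essentially the same route as the paper, which simply asserts that the global estimate is obtained ``by using the usual Sobolev inequality and elliptic estimates on the interior part'' together with the finite-cone estimate of the preceding lemma; your cutoff decomposition $u=\phi u+(1-\phi)u$ and the absorption of the first-order commutator terms on the overlap annulus is exactly the patching argument the paper leaves implicit. You in fact supply more detail than the paper does, and your identification of the commutator/absorption step as the only point needing care is accurate.
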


These weighted Sobolev inequalities and weighted elliptic estimates imply the following asymptotic order estimate for the minimizer of the $W$-functional near the conical singularities.

\begin{thm}\label{AsymptoticOrderOnCone}
Let $u$ be the minimizer of $W$-functional obtained in Theorem $\ref{ExistenceOfMinimizer}$. If the manifold satisfying the condition $AC_{1}$ defined in $(\ref{AC})$, then we have
$$u=o(r^{-\alpha}), \ \ \ \text{as} \ \ r\rightarrow0,$$
for any $\alpha>\frac{n}{2}-1$.
\end{thm}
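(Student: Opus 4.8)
The plan is to run a weighted elliptic bootstrap starting from the integrability we already know about the minimizer and repeatedly applying Lemma~\ref{EllipticEstimate} and Lemma~\ref{SobolevInequality}. We know $u\in H^{1}(M)=W^{1,2}_{1-\frac{n}{2}}(M)$, and by the Sobolev embedding in Proposition~\ref{SobolevInequality1} (or Proposition~\ref{WeightedSobolevEmbedding}) that $u\in L^{q}$ for all $q\le\frac{2n}{n-2}$, hence $u\in W^{0,p}_{\delta}(M)$ for suitable $(p,\delta)$; the point is to locate, for as large an exponent as possible, the pair $(p,\delta)$ for which $u\in W^{0,p}_{\delta}(M)$ is already known, keeping $\delta$ as close to $-\frac{n}{2}$ as the constraint $q(\delta+\ldots)+n>0$ forces (i.e. $\delta>-\frac{n}{p}$ roughly). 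The elliptic equation satisfied by $u$ is
\begin{equation}
Lu=-\Delta u+\tfrac14 R_{g}u=\tfrac{2}{\tau}u\ln u+\tfrac{n+m}{\tau}u,
\end{equation}
so the whole strategy rests on controlling the weighted $W^{0,p}_{\delta-2}$-norm of the right-hand side in terms of a weighted norm of $u$ itself at a slightly worse weight but the same $p$ — the crucial observation being that the nonlinearity $u\ln u$ is dominated by $a+bu^{1+\gamma}$ for every $\gamma>0$, so it costs only an arbitrarily small loss in integrability, which one absorbs by the interpolation/embedding already available.

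The key steps, in order, are: (1) Fix $p$ with $2>\frac{n}{p}$ (e.g. $p$ slightly larger than $\frac{n}{2}$ when $n\ge3$) so that Lemma~\ref{SobolevInequality} applies with $l=0$, and check that $u\in W^{0,p}_{\delta_{0}}(M)$ for $\delta_{0}$ any number $>-\frac{n}{p}$, using that $u\in L^{p}$ from Proposition~\ref{SobolevInequality1} and the weight $\chi$ is bounded below by $1$ away from $x$ — the restriction near $x$ is exactly $p\delta_{0}+n>0$. (2) Show that if $u\in W^{0,p}_{\delta}(M)$ with $\delta>-\frac{n}{p}$, then $u\ln u$ (hence the whole right-hand side of the equation) lies in $W^{0,p}_{\delta-2}(M)$: estimate $\chi^{\delta-2}|u\ln u|\le \chi^{\delta-2}(a|u|+b|u|^{2+\gamma})$ in $L^{p}$ by splitting off the $\chi^{-2}$ and either using boundedness of $\chi^{-1}$ or Hölder against the higher-$L^{q}$ bound on $u$; one needs $\gamma$ small enough that $(2+\gamma)\frac{p}{\cdot}$ stays below the Sobolev exponent, which is fine since we can decrease $\gamma$. (3) Apply Lemma~\ref{EllipticEstimate} to conclude $u\in W^{2,p}_{\delta}(M)$ with the same $\delta$; note this step does \emph{not} improve the weight — it just upgrades from $W^{0,p}_{\delta}$ to $W^{2,p}_{\delta}$. (4) Feed $u\in W^{2,p}_{\delta}(M)$ into Lemma~\ref{SobolevInequality} with $l=0$, which needs $2>\frac{n}{p}$ (arranged in step 1), to get $u\in C^{0}_{\delta}(M)$, i.e. $|u(r,\theta)|\le C\,r^{-\delta}$, together with the decay statement $u=o(r^{-(-\delta)})=o(r^{\delta})$ as $r\to0$ — wait, reading Lemma~\ref{SobolevInequality}'s convention, $|u|=o(r^{\delta})$, so with $\delta$ close to $-\frac{n}{p}$ we obtain $u=o(r^{\delta})$ for any $\delta>-\frac{n}{p}$, and since $p$ may be taken arbitrarily close to $2$ (as long as $p<n$ and $2>\frac{n}{p}$, i.e. $p>\frac{n}{2}$), we may push $-\frac{n}{p}$ up toward $-\frac{n}{2}$; however the theorem claims the stronger $u=o(r^{-\alpha})$ for $\alpha>\frac{n}{2}-1$, i.e. $u=o(r^{\delta})$ for $\delta>1-\frac{n}{2}$, so the naive single pass is not enough and one must iterate.

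The main obstacle, and the real content, is therefore the iteration that improves the weight from the crude $\delta>-\frac{n}{p}$ (roughly $-\frac n2$) up to $\delta>1-\frac n2$ (a gain of essentially one full power of $r$, matching the gain of two derivatives minus the two powers lost to $\Delta$ and the $r^{-2}$ blow-up of $R_g$). The mechanism is: once $u\in C^{0}_{\delta_{0}}(M)$, one knows $u=O(r^{-\delta_{0}})$, hence $u^{2+\gamma}=O(r^{-(2+\gamma)\delta_{0}})$ and $u\ln u=O(r^{-\delta_{0}-\gamma'})$ pointwise, which places the right-hand side $\frac2\tau u\ln u+\frac{n+m}\tau u$ in $W^{0,p}_{\delta_{1}-2}$ for $\delta_{1}$ as large as $\min(\delta_{0}+2-\gamma', \text{const})$ up to the obstruction set of indicial roots — but here there is no obstruction from indicial roots because we only push up to $1-\frac n2$, which is strictly below $2-\frac n2$, the first place where the model operator $-\partial_r^2-\frac{n-1}r\partial_r+\frac1{r^2}(\Delta_N+\frac14 R_{h_0})$ could fail to be an isomorphism on the relevant weighted space; since $R_{h_0}>n-2$ guarantees the bottom eigenvalue of $\Delta_{N}+\frac14 R_{h_0}$ is large enough (this is exactly the hypothesis used throughout \cite{DW17}), Lemma~\ref{EllipticEstimate} holds for all the weights $\delta\in(1-\frac n2,\;$something$)$ we traverse. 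So the iteration is: start at $\delta_{0}$ just above $-\frac n2$; at each stage use the pointwise bound from Lemma~\ref{SobolevInequality} to reinsert into Lemma~\ref{EllipticEstimate} at weight $\delta_{j+1}=\delta_{j}+2-\gamma_{j}$ (losing only the small $\gamma_{j}$ to the log), stopping as soon as $\delta_{j}>1-\frac n2$; finitely many steps suffice. The final output is $u=o(r^{\delta})$ for every $\delta>1-\frac n2$, equivalently $u=o(r^{-\alpha})$ for every $\alpha>\frac n2-1$, as claimed. The one technical point to be careful about is that Lemmas~\ref{SobolevInequality} and~\ref{EllipticEstimate} are stated on the finite model cone and on $M$ under $AC_{1}$; since all the bootstrap happens in a fixed conical neighborhood $C_{\epsilon}(N)$ of $x$, one cuts off $u$ by the radial function $\phi_{1}$ from Proposition~\ref{CompactWeightedSobolevEmbedding}, bounds the commutator terms $[\Delta,\phi_1]u$ (supported in a compact annulus away from $x$, hence harmless and contributing nothing to the asymptotics), and applies the cone versions — exactly the localization already used in the proof of Proposition~\ref{CompactWeightedSobolevEmbedding}.
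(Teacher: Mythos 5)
There is a genuine gap, and it sits at the heart of your iteration. Your scheme starts from the unweighted information $u\in L^{p}$ and claims this gives $u\in W^{0,p}_{\delta_{0}}(M)$ for any $\delta_{0}>-\frac{n}{p}$ ``since $p\delta_{0}+n>0$''; the inclusion actually goes the other way. Near the tip $\|u\|^{p}_{W^{0,p}_{\delta}}\approx\int |u|^{p}r^{-p\delta-1}\,dr\,d\theta$, so $L^{p}\subset W^{0,p}_{\delta}$ only for $\delta\leq-\frac{n}{p}$ (larger $\delta$ means \emph{more} decay and is not implied by mere $L^{p}$ membership; the condition $p\delta+n\geq0$ is what gives the reverse inclusion $W^{0,p}_{\delta}\subset L^{p}$, as used in Proposition \ref{CompactWeightedSobolevEmbedding}). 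Having thus started at weight $\approx-\frac{n}{2}$, you are forced to invent a weight-improving step $\delta_{j+1}=\delta_{j}+2-\gamma_{j}$, and this is the second, more serious problem: Lemma \ref{EllipticEstimate} does not improve the weight. It assumes $u\in W^{0,p}_{\delta}$ \emph{and} $Lu\in W^{0,p}_{\delta-2}$ and only upgrades to $W^{2,p}_{\delta}$ at the \emph{same} $\delta$. Concluding $u\in W^{2,p}_{\delta'}$ with $\delta'>\delta$ from $Lu\in W^{0,p}_{\delta'-2}$ alone would require an isomorphism/Fredholm statement for $L$ on weighted spaces between indicial roots (your appeal to $R_{h_{0}}>n-2$ and the roots of the model operator), which is neither stated in the paper nor proved in your proposal. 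As written, the iteration therefore has no lemma to run on.

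The paper's route avoids this entirely, and you already have all the ingredients for it: the minimizer lies in $H^{1}(M)=W^{1,2}_{1-\frac{n}{2}}(M)$, so the \emph{weighted} embedding of Proposition \ref{WeightedSobolevEmbedding} (which preserves the weight $\delta$ while raising integrability) gives $u\in W^{0,p}_{1-\frac{n}{2}}(M)$ for $p\leq\frac{2n}{n-2}$ --- i.e.\ the target order $1-\frac{n}{2}$ is present from the start, and no weight gain is ever needed. The bound $|u\ln u|\leq a(\gamma)+|u|^{1+\gamma}$ multiplies the weight by $(1+\gamma)$ and divides the admissible $p$ by $(1+\gamma)$; Lemma \ref{EllipticEstimate} then yields $u\in W^{2,p}_{(1-\frac{n}{2})(1+\gamma)}$, and the only thing the bootstrap must improve is the exponent $p$: Lemma \ref{SobolevInequality} with $l=0$ needs $p>\frac{n}{2}$, which a single pass gives only for $2<n<6$; for $n\geq6$ one reapplies Proposition \ref{WeightedSobolevEmbedding} to $W^{2,p}_{(1-\frac{n}{2})(1+\gamma)}$ to raise the admissible $p$ (at the cost of another factor $(1+\gamma)$ in the weight) and repeats finitely many times. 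Since $\gamma>0$ is arbitrary, the final pointwise statement is $u=o(r^{(1-\frac{n}{2})(1+\gamma)^{k}})=o(r^{-\alpha})$ for every $\alpha>\frac{n}{2}-1$. So the correct fix for your proposal is not a better indicial-root analysis but replacing the unweighted starting point by the weighted one and redirecting the iteration from ``improve $\delta$'' to ``improve $p$''.
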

\begin{proof}
Since $u$ satisfies the second order elliptic equation
\begin{equation}
Lu=\frac{2}{\tau}u\ln u+\frac{n+m}{\tau}u,
\end{equation}
and $u\in W^{1, 2}_{1-\frac{n}{2}}(M)$, where $m$ is the infimum of the $W$-functional,  by the weighted Sobolev embedding in Proposition $\ref{WeightedSobolevEmbedding}$, we have $u\in W^{0, p}_{1-\frac{n}{2}}(M)$, for any $1\leq p\leq\frac{2n}{n-2}$.

Because for each $\gamma>0$ there exists a constant $a(\gamma)$ such that $|u\ln u|\leq a(\gamma)+|u|^{1+\gamma}$, we have $u\ln u\in W^{0, p}_{(1-\frac{n}{2})(1+\gamma)}(M)\subset W^{0, p}_{(1-\frac{n}{2})(1+\gamma)-2}(M)$ for any $1\leq p\leq\frac{2n}{(n-2)}\frac{1}{(1+\gamma)}$ and any $\gamma>0$. So we have $Lu\in W^{0, p}_{(1-\frac{n}{2})(1+\gamma)-2}(M)$, since $u\in W^{0, p}_{1-\frac{n}{2}}(M)\subset W^{0, p}_{(1-\frac{n}{2})(1+\gamma)-2}(M)$.

Thus, by Lemma $\ref{EllipticEstimate}$, $u\in W^{2, p}_{(1-\frac{n}{2})(1+\gamma)}(M)$ for any $1\leq p\leq\frac{2n}{(n-2)}\frac{1}{(1+\gamma)}$ and any $\gamma>0$. If $2<n<6$, then by Lemma $\ref{SobolevInequality}$ we have obtained that $u=o(r^{-\alpha})$ as $r\rightarrow0$ for any $\alpha>\frac{n}{2}-1$, since $\gamma>0$ could be arbitrarily small.

If $n\geq6$, then using Proposition $\ref{WeightedSobolevEmbedding}$ again, we have $u\in W^{0, p}_{(1-\frac{n}{2})(1+\gamma)}(M)$ for any $1\leq p\leq\frac{2n}{(n-2)(1+\gamma)-4}$ and any $\gamma>0$, and $u\ln u\in W^{0, p}_{(1-\frac{n}{2})(1+\gamma)^{2}}(M)$ for any $1\leq p\leq\frac{2n}{[(n-2)(1+\gamma)-4]}\frac{1}{(1+\gamma)}$ and any $\gamma>0$. Then as before we have $Lu\in W^{0, p}_{(1-\frac{n}{2})(1+\gamma)^{2}-2}(M)$, and by Lemma $\ref{EllipticEstimate}$, we have $u\in W^{2, p}_{(1-\frac{n}{2})(1+\gamma)^{2}}(M)$, for any $1\leq p\leq\frac{2n}{[(n-2)(1+\gamma)-4]}\frac{1}{(1+\gamma)}$ and any $\gamma>0$.

For $n=6$, we can choose $p\geq1$, such that $2>\frac{n}{p}=\frac{6}{p}>2\gamma(1+\gamma)$. And then by Lemma $\ref{SobolevInequality}$ we have obtained that $u=o(r^{-\alpha})$ as $r\rightarrow0$ for any $\alpha>\frac{n}{2}-1$, since $\gamma>0$ could be arbitrarily small.

For $6<n<10$, because $\frac{2n}{[(n-2)(1+\gamma)-4]}\frac{1}{(1+\gamma)}<\frac{2n}{(n-6)(1+\gamma)^{2}}$, we can choose $p\geq1$ such that $2>\frac{n}{p}>\frac{(n-6)(1+\gamma)^{2}}{2}$ for sufficiently small $\gamma>0$. Thus $u=o(r^{-\alpha})$ as $r\rightarrow0$ for any $\alpha>\frac{n}{2}-1$, since $\gamma>0$ could be arbitrarily small.

Then for each fixed $n\geq10$, by repeating this process finitely many times, we can always obtain  that  $u=o(r^{-\alpha})$ as $r\rightarrow0$ for any $\alpha>\frac{n}{2}-1$.
\end{proof}
\end{section}



\end{document}